\theoremstyle{plain}
\newtheorem{thm}{Theorem}
\newtheorem{lem}[thm]{Lemma}
\newtheorem{prop}[thm]{Proposition}
\newtheorem{setting}[thm]{Setting}
\theoremstyle{definition}
\theoremstyle{definition}
\newtheorem{defi}[thm]{Definition}
\newcommand{\R}{\mathbb{R}}
\newcommand{\Z}{\mathbb{Z}}
\newcommand{\N}{\mathbb{N}}
\renewcommand{\H}{\mathbb{H}}
\newcommand{\C}{\mathbb{C}}
\newcommand*{\rom}[1]{\romannumeral}
\renewcommand{\Re}{{Re}}
\DeclareMathOperator{\GL}{GL}
\DeclareMathOperator{\SO}{SO}
\DeclareMathOperator{\Spin}{Spin}
\DeclareMathOperator{\Tr}{Tr}
\DeclareMathOperator{\tr}{tr}
\DeclareMathOperator{\End}{End}
\DeclareMathOperator{\Vol}{Vol}
\DeclareMathOperator{\rank}{rank}
\DeclareMathOperator{\Id}{Id}
\DeclareMathOperator{\Ad}{Ad}
\DeclareMathOperator{\ad}{ad}
\DeclareMathOperator{\Norm}{Norm}
\DeclareMathOperator{\cent}{Centr}
\DeclareMathOperator{\spec}{spec}
\DeclareMathOperator{\sign}{sign}
\DeclareMathOperator{\prim}{prime}
\DeclareMathOperator{\Cl}{Cl}
\numberwithin{equation}{section}
\numberwithin{thm}{section}
\title{\huge{Twisted Dirac operators and dynamical zeta functions}}
\author{Polyxeni Spilioti}
\date{}
\begin{document}

\maketitle

\textbf{Abstract}. In this paper, we consider the dynamical zeta functions of Ruelle and Selberg associated with the geodesic flow of 
a compact hyperbolic odd dimensional manifold $X$. These functions are initially defined on one complex variable $s$ in some right half-plane of $\C$.
Our goal is the continue meromorphically the dynamical zeta functions to the whole complex plane, using the Selberg trace formula for arbitrary, not necessarily unitary,
representations $\chi$ of the fundamental group.
First, we prove a trace formula for the integral operator $D^{\sharp}_{\chi}(\sigma)e^{-t(D^{\sharp}_{\chi}(\sigma))^{2}}$, induced by the twisted Dirac operator $D^{\sharp}_{\chi}(\sigma)$
on $X$.
Then we use these results to establish the meromorphic continuation of the dynamical zeta functions to $\C$.

\textit{Keywords}: Twisted Dirac operator, Ruelle zeta function, Selberg zeta function.

\tableofcontents

\begin{center}
\section{\textnormal{Introduction}}
\end{center}

The zeta functions of Ruelle and Selberg are dynamical functions associated with the geodesic flow of a compact hyperbolic manifold $X$.
They can be represented by infinite products, which involve exponentials
of the lengths of the prime closed geodesics on $X$.

Our geometrical setting consists of 
compact hyperbolic manifolds $X$ of odd dimension $d$, obtained as $X:=\Gamma\backslash \H^{d}$, where 
$\H^{d}$ denotes the $d$-dimensional real hyperbolic space and 
$\Gamma$ a discrete torsion-free subgroup of $\SO^{0}(d,1)$.
Here, we identify  $\H^{d}$ with $\SO^{0}(d,1)/ \SO(d)$.
We set $G:=\SO^{0}(d,1)$ and $K:=\SO(d)$. Let $G=KAN$ be the Iwasawa
decomposition of $G$ and $M$ the centralizer of $A$ in $K$.
Let $\widehat{M}$ be the set of the equivalent classes of unitary irreducible representations of $M$. 

We denote by $[\gamma]$ the $\Gamma$-conjugacy class
of $\gamma\in\Gamma$. One can associate to  $[\gamma]\neq e$, a unique closed geodesic 
$c_{\gamma}$. We consider the lengths $l(\gamma)$ of
the corresponding closed geodesics
$c_{\gamma}$.
We call the conjugacy class $[\gamma]$ prime if 
there exist no $k\in\N$ with $k>1$ and $\gamma_{0}\in\Gamma$ such that $\gamma=\gamma_{0}^{k}$.
The prime geodesics can be viewed as the
geodesics of minimum length.

Let $\sigma\in\widehat M$. 
We give here the definition of the zeta functions in terms of 
finite dimensional representations $\chi$ of $\Gamma$.
The twisted Selberg zeta function $Z(s;\sigma,\chi)$ is defined by 
\begin{equation*}
Z(s;\sigma,\chi):=\prod_{\substack{[\gamma]\neq e,\\ [\gamma]\prim}} \prod_{k=0}^{\infty}\det\Big(\Id-\big(\chi(\gamma)\otimes\sigma(m_\gamma)\otimes S^k(\Ad(m_\gamma a_\gamma)_{\overline{\mathfrak{n}}})\big)e^{-(s+|\rho|)l(\gamma)}\Big),
\end{equation*}
where $\overline{\mathfrak{n}}$ is the sum of the negative root spaces of the system $(\mathfrak{g},\mathfrak{a})$
and $S^k(\Ad(m_\gamma a_\gamma)_{\overline{\mathfrak{n}}})$ denotes the $k$-th
symmetric power of the adjoint map $\Ad(m_\gamma a_\gamma)$ restricted to $\overline{\mathfrak{n}}$.
$Z(s;\sigma,\chi)$ is defined for $s\in\C$ with Re$(s)>c_{1}$, for some constant $c_{1}>0$.

We define also the twisted Ruelle zeta function $R(s;\sigma,\chi)$ by 
\begin{equation*}
 R(s;\sigma,\chi):=\prod_{\substack{[\gamma]\neq{e}\\ [\gamma]\prim}}\det(\Id-\chi(\gamma)\otimes\sigma(m_{\gamma})e^{-sl(\gamma)})^{(-1)^{d-1}},
\end{equation*}
where $s\in\C$ with Re$(s)>c_{2}$, for some constant $c_{2}>0$.

The meromorphic continuation of the Ruelle and Selberg zeta function has been long studied by Fried (\cite{Fried}), Bunke and Olbrich (\cite{BO}), Wotzke (\cite{Wo}) and Pfaff
(\cite{Pf}), but only for 
specific representations of the subgroup $\Gamma$.
Under certain assumptions for the representation of $\Gamma$ i.e., the representation is orthogonal and acyclic, 
Fried (\cite[Theorem 1]{Fried}) managed to prove that the Ruelle zeta function admits a meromorphic continuation to the whole complex plane and moreover that 
is regular at zero and its absolute value at zero is equal to the Ray-Singer analytic torsion.
Bunke and Olbrich (\cite{BO}) dealt with the meromorphic continuation of the dynamical zeta functions but in the case of a unitary representation 
of $\Gamma$. Wotzke (\cite{Wo}) generalized in his thesis the results of Fried to the case of the induced representations of $\Gamma$, i.e., representations
arising from restrictions of finite dimensional representations of $G=\SO^{0}(d,1)$. 
Pfaff (\cite{Pf}) extended these results to the co-finite case using the induced representation as well. 

In \cite{Spil}, the existing results for the meromorphic continuation of the dynamical zeta function are generalized to the case of an arbitrary representation of $\Gamma$,
not necessarily unitary. 
In this paper, we study further this case, under the condition that the action of the restricted Weyl group $W_{A}$ on $\widehat{M}$
is not trivial, i.e., $w\sigma\neq\sigma$, where $w$
is a non-trivial element of $W_{A}$.
We have then to introduce three more zeta functions the symmterized $S(s,\sigma,\chi)$, super $Z^{s}(s,\sigma,\chi)$ and super Ruelle $R^{s}(s\,\sigma,\chi)$ zeta function
(see Definitions 3.3, 3.4 and 3.5). To be able to prove the meromorphic continuation of the zeta functions, we introduce the twisted Dirac operator, defined in a 
similar way as the twisted Bochner-Laplace operator in \cite{M1}.
 
We give here a short description of the twisted Dirac operator. As before, we consider an arbitrary
finite dimensional representation 
$\chi$ of $\Gamma$. Let $E_{\chi}$ be the flat vector bundle over $X$.
Let $\widehat{K}$ be the set of the equivalent classes of unitary irreducible representations of $K$. 
Let  $s$ be the spin representation of $K$ and $\tau(\sigma)\in\widehat{K}$. 
We define the representation $\tau_{s}(\sigma)$ of $K$ by $\tau_{s}(\sigma):= s\otimes\tau(\sigma)$.
We consider then the  locally homogeneous vector bundle $E_{\tau_{s}(\sigma)}$ over $X$. 
We let $D(\sigma)$ be the Dirac operator associated with $\tau_{s}(\sigma)$.
Let $D^{\sharp}_{\chi}(\sigma)$ be the twisted Dirac operator acting on $C^{\infty}(X,E_{\tau_{s}(\sigma)}\otimes E_{\chi})$.
Locally, it is given by the following formula.
\begin{equation*}
\widetilde{D}^{\sharp}_{\chi}(\sigma)=\widetilde{D}(\sigma)\otimes \Id_{V_{\chi}},
\end{equation*}
where $\widetilde{D}^{\sharp}_{\chi}(\sigma)$ and $\widetilde{D}(\sigma)$  
are the lifts to $\widetilde{X}$ of 
$D^{\sharp}_{\chi}(\sigma)$ and $D(\sigma)$, respectively. 
Since the representation of $\Gamma$
is not necessarily unitary, we deal with non self-adjoint operators. 
Nevertheless, one can observe by the formula above that the twisted Dirac operator $D^{\sharp}_{\chi}(\sigma)$
has the same principal symbol as ${D}(\sigma)$.
Hence, it has nice spectral properties, i.e., its the spectrum is a discrete subset of a positive cone in $\C$. 
Let $D^{\sharp}_{\chi}(\sigma)e^{-tD^{\sharp}_{\chi}(\sigma)}$ be the associated operator acting on the space
of smooth sections of the vector bundle $E_{\tau_{s}(\sigma)}\otimes E_{\chi}$.
By \cite[Lemma 2.4]{M1}, this is an integral operator with smooth kernel and hence of trace class.
We derive a corresponding trace formula for $D^{\sharp}_{\chi}(\sigma)e^{-tD^{\sharp}_{\chi}(\sigma)}$.
\newline
\begin{thm}
 For every $\sigma \in \widehat{M}$,
 \begin{equation*}
 \Tr(D^{\sharp}_{\chi}(\sigma)e^{-t(D^{\sharp}_{\chi}(\sigma))^{2}})=
  \sum_{[\gamma]\neq e}\frac{-2\pi i}{(4\pi t)^{3/2}}\frac{l^{2}(\gamma)\tr(\chi(\gamma)\otimes(\sigma(m_{\gamma})-w\sigma(m_{\gamma})))}
  {n_{\Gamma}(\gamma)D(\gamma)}e^{-l^{2}(\gamma)/4t}.
 \end{equation*}
\end{thm}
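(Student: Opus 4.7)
The plan is to follow the Selberg trace formula machinery for twisted operators, as developed in \cite{M1} for the Bochner-Laplace operator and adapted to Dirac-type operators in \cite{Wo} and \cite{Pf}, translated to the first-order operator $\widetilde{D}(\sigma)e^{-t\widetilde{D}(\sigma)^{2}}$ with an additional twist by the possibly non-unitary representation $\chi$. The key structural input from the excerpt is that $\widetilde{D}^{\sharp}_{\chi}(\sigma) = \widetilde{D}(\sigma) \otimes \Id_{V_{\chi}}$ locally, so the Schwartz kernel of the descended operator on $X$ is a $\Gamma$-average of the lifted kernel tensored with $\chi$:
\begin{equation*}
K_{t}(x,y) = \sum_{\gamma \in \Gamma} \widetilde{K}_{t}(\widetilde{x}, \gamma \widetilde{y}) \otimes \chi(\gamma).
\end{equation*}
Integrating the pointwise trace over a fundamental domain and reorganizing by $\Gamma$-conjugacy classes produces a sum over $[\gamma]$ whose summand factors as $\tr\chi(\gamma)$ times a purely geometric orbital integral on $G$; the non-unitarity of $\chi$ is harmless here because $\chi$ is decoupled from the Dirac factor.

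Next I would dispose of the identity contribution. On $\widetilde{X}$ the diagonal value $\widetilde{K}_{t}(\widetilde{x},\widetilde{x})$ is given by the Plancherel formula and, for the operator $\widetilde{D}(\sigma)e^{-t\widetilde{D}(\sigma)^{2}}$, reduces to a spectral integral of $\lambda e^{-t\lambda^{2}}$ against the even Plancherel density $\mu(\lambda)$. Oddness of $\lambda e^{-t\lambda^{2}}$ forces this integral to vanish, which accounts for the absence of an identity term in the statement.

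The main technical step is the hyperbolic orbital integral. Each non-trivial $[\gamma]$ is $G$-conjugate to $m_{\gamma}a_{\gamma} \in MA$ with $a_{\gamma}=\exp(l(\gamma)H_{1})$. Using the $KAK$ integration formula for $\SO^{0}(d,1)$ the orbital integral reduces to a one-dimensional Fourier transform along $\mathfrak{a}\simeq\R$, and the identity
\begin{equation*}
\int_{\R}\lambda e^{-t\lambda^{2}}e^{i\lambda l}\,\frac{d\lambda}{2\pi} = \frac{2\pi i\,l}{(4\pi t)^{3/2}}\,e^{-l^{2}/4t}
\end{equation*}
supplies the $(4\pi t)^{-3/2}$ prefactor and the Gaussian $e^{-l^{2}(\gamma)/4t}$. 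The volume of $\Gamma_{\gamma}\backslash G_{\gamma}$ contributes the extra $l(\gamma)/n_{\Gamma}(\gamma)$, while $D(\gamma)$ is the Weyl-type denominator from the hyperbolic orbital integral. The antisymmetric combination $\sigma(m_{\gamma})-w\sigma(m_{\gamma})$ arises from the chiral splitting of the spin representation: restricted to $MA$, the character of $s$ decomposes as a difference of the two half-spin modules which are interchanged by the non-trivial element $w\in W_{A}$, and tensoring with $\tau(\sigma)$ and projecting to the $\sigma$-isotypic component produces exactly $\sigma-w\sigma$. I expect the main obstacle to be tracking the exact constants, in particular the sign giving $-2\pi i$ and the precise form of the chiral spin character, but this is a careful bookkeeping exercise paralleling the Dirac trace formula computations in \cite{Pf}.
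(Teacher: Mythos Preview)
Your proposal is correct and follows essentially the same route as the paper: the kernel is written as a $\Gamma$-average tensored with $\chi$, the trace formula of \cite[Proposition 6.1]{M1} is invoked, the identity term vanishes by oddness of $\lambda e^{-t\lambda^{2}}$ against the even Plancherel polynomial, and the hyperbolic term is computed via the one-dimensional Fourier integral you wrote down. The only place where the paper is more explicit than your sketch is the derivation of the character $\Theta_{\sigma',\lambda}(k_{t}^{\tau_{s}(\sigma)})=\pm\lambda e^{-t\lambda^{2}}$: rather than arguing informally from the chiral splitting of $s$, the paper computes $\Tr\widetilde{D}_{\tau_{s}(\sigma)}(\pi_{\sigma',\lambda})$ via \cite[Proposition 3.6]{MS} as $\lambda\big(\dim(V_{\sigma'}\otimes V_{\tau(\sigma)}\otimes S^{+})^{M}-\dim(V_{\sigma'}\otimes V_{\tau(\sigma)}\otimes S^{-})^{M}\big)$ and then identifies this multiplicity difference with $[\sigma-w\sigma:\sigma']$ using Proposition~4.1, which simultaneously feeds both the identity and hyperbolic computations.
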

This trace formula is the key point to prove the meromorphic continuation of the super zeta function.
\begin{thm}
The super zeta function $Z^{s}(s;\sigma,\chi)$ admits a meromorphic continuation to the whole complex plane $\C$. The singularities 
are located at $\{s_{k}^{\pm}=\pm i\lambda_{k}:\lambda_{k}\in \spec(D^{\sharp}_{\chi}(\sigma)), k\in\N\}$
of order  $\pm m_{s}(\lambda_{k})$, where $m_{s}(\lambda_{k})=m(\lambda_{k})-m(-\lambda_{k})\in\N$ and 
$m(\lambda_{k})$ denotes the algebraic multiplicity of the eigenvalue $\lambda_{k}$.
\end{thm}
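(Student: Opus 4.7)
The strategy is to identify the logarithmic derivative of $Z^{s}(s;\sigma,\chi)$ with a Laplace-type transform in $t$ of the trace formula supplied by Theorem~1, and to read off the singularities from the spectral expansion of that trace.

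First I would differentiate $\log Z^{s}(s;\sigma,\chi)$ using its Euler product (Definition~3.4) together with $\log\det(\Id-A)=-\sum_{n\geq1}\tr(A^{n})/n$. Collapsing each primitive class and its powers into a single sum over all $[\gamma]\neq e$, the series $\tfrac{d}{ds}\log Z^{s}(s;\sigma,\chi)$ takes the shape
\begin{equation*}
\sum_{[\gamma]\neq e}\frac{l(\gamma_{0})\,\tr\bigl(\chi(\gamma)\otimes(\sigma(m_{\gamma})-w\sigma(m_{\gamma}))\bigr)}{D(\gamma)}\,e^{-sl(\gamma)},
\end{equation*}
with $l(\gamma_{0})=l(\gamma)/n_{\Gamma}(\gamma)$; this already matches the geometric side of Theorem~1 termwise, up to the Gaussian weight $l(\gamma)(4\pi t)^{-3/2}e^{-l^{2}(\gamma)/4t}$ and a universal constant.

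Second I would multiply the trace formula of Theorem~1 by $e^{-ts^{2}}$ and integrate in $t$ on $(0,\infty)$. The $K_{1/2}$-Bessel identity
\begin{equation*}
\int_{0}^{\infty}\frac{l(\gamma)}{(4\pi t)^{3/2}}\,e^{-l^{2}(\gamma)/4t-ts^{2}}\,dt=\frac{e^{-sl(\gamma)}}{4\pi},\qquad\Re(s)>0,
\end{equation*}
converts the Gaussian weight into the exponential needed above, and a short bookkeeping shows that the integrated geometric side is a non-zero constant multiple of $\tfrac{d}{ds}\log Z^{s}(s;\sigma,\chi)$ in a right half-plane. On the spectral side, writing $\Tr(D^{\sharp}_{\chi}(\sigma)e^{-t(D^{\sharp}_{\chi}(\sigma))^{2}})=\sum_{k}m(\lambda_{k})\lambda_{k}e^{-t\lambda_{k}^{2}}$ and integrating termwise gives, after pairing each $\lambda_{k}$ with $-\lambda_{k}$,
\begin{equation*}
\sum_{k}\frac{m(\lambda_{k})\lambda_{k}}{s^{2}+\lambda_{k}^{2}}=\frac{1}{2i}\sum_{k}m_{s}(\lambda_{k})\Bigl(\frac{1}{s-i\lambda_{k}}-\frac{1}{s+i\lambda_{k}}\Bigr),
\end{equation*}
a function meromorphic on all of $\C$ with simple poles of residue $\pm m_{s}(\lambda_{k})/(2i)$ at $s=\pm i\lambda_{k}$. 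Equating the two sides and antidifferentiating in $s$ extends $\log Z^{s}$ meromorphically and yields zeros (resp.\ poles) of order $m_{s}(\lambda_{k})$ at $s=+i\lambda_{k}$ (resp.\ $s=-i\lambda_{k}$), exactly as claimed.

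The main obstacle is making these termwise manipulations rigorous. Absolute convergence of the spectral series needs a Weyl-type bound on $\sum_{|\lambda_{k}|\leq R}|m(\lambda_{k})|$, which is available because $(D^{\sharp}_{\chi}(\sigma))^{2}$ shares its principal symbol with a Laplace-type elliptic self-adjoint operator; the exchange of $\sum_{[\gamma]}$ with $\int_{0}^{\infty}\cdots dt$ on the geometric side is harmless thanks to exponential length-spectrum bounds and the rapid decay of the Gaussian. The subtler point is that the Laplace integral of the spectral sum is only conditionally convergent as $t\to 0$: I would split the integral at some $T>0$, control the $(0,T)$ contribution via the small-$t$ behaviour extracted from Theorem~1 itself (every $\gamma\neq e$ term is Gaussian in $1/t$), and then identify the two sides by analytic continuation in $s$ before letting $T\to 0$. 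Once the identity holds in some half-plane, the meromorphy on all of $\C$ is inherited from the meromorphy of the spectral side by the identity principle.
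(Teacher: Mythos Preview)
Your strategy matches the paper's skeleton—Laplace-transform the trace formula in $t$, match the geometric side to $L^{s}(s)$ via the $K_{1/2}$ identity, extract poles from the spectral side—but the regularization device differs. The displayed sum $\sum_{k}m(\lambda_{k})\lambda_{k}/(s^{2}+\lambda_{k}^{2})$ in your second paragraph is not conditionally convergent; by Weyl's law the general term is $\sim|\lambda_{k}|^{-1}\sim k^{-1/d}$ and the series diverges for every $s$ when $d\geq 3$. The issue is not the $t\to 0$ behaviour of the Laplace integral itself (that integral converges, precisely because Theorem~5.9 has no identity contribution and the trace is $O(t^{-3/2}e^{-l_{0}^{2}/4t})$), but the illegitimate swap of $\sum_{k}$ and $\int_{0}^{\infty}$. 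The paper sidesteps this by replacing the single resolvent with a product of $N>d/2+1$ of them, so that $D^{\sharp}_{\chi}(\sigma)\prod_{i}(D^{\sharp}_{\chi}(\sigma)^{2}+s_{i}^{2})^{-1}$ is trace class and Lidskii gives an absolutely convergent $\sum_{k}m_{s}(\lambda_{k})\lambda_{k}\prod_{i}(\lambda_{k}^{2}+s_{i}^{2})^{-1}$; a partial-fraction identity then isolates the pole structure in $s=s_{1}$ with $s_{2},\dots,s_{N}$ frozen (Proposition~6.1).

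Your cutoff at $T$ is a genuine and lighter alternative, but it should be run differently from how you describe it: keep $T>0$ fixed and observe that $\int_{0}^{T}e^{-ts^{2}}\Tr(D^{\sharp}_{\chi}(\sigma)e^{-t(D^{\sharp}_{\chi}(\sigma))^{2}})\,dt$ is entire in $s$ (again by the vanishing identity term, equation~(5.21)), while $\int_{T}^{\infty}$ yields the absolutely convergent meromorphic sum $\sum_{k}m_{s}(\lambda_{k})\lambda_{k}e^{-T(s^{2}+\lambda_{k}^{2})}/(s^{2}+\lambda_{k}^{2})$ with residues $\pm m_{s}(\lambda_{k})/(2i)$ at $\pm i\lambda_{k}$. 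That already exhibits $L^{s}(s)$ as (entire)$+$(meromorphic) and finishes the proof; do not ``let $T\to 0$'', since that limit does not exist on the spectral side. The paper's multi-resolvent method costs more bookkeeping but has the advantage of transferring verbatim to the symmetrized zeta function (Theorem~7.1), where the identity contribution is nonzero and your shortcut would require a separate polynomial subtraction.
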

In \cite{Spil}, the differential operator $A^{\sharp}_{\chi}(\sigma)$, which is induced by the twisted Bochner-Laplace operator $\Delta^{\sharp}_{\tau,\chi}$, is introduced.
Moreover, a trace formula for the corresponding heat semi-group $e^{-tA^{\sharp}_{\chi}(\sigma)}$ is proved. In the present case, where $w \sigma\neq \sigma$,
we derive a trace formula for for the operator $e^{-tA^{\sharp}_{\chi}(\sigma)}$, which
is  slightly different.

\begin{thm} 
 For every $\sigma \in \widehat{M}$ we have
   \begin{align*}\label{f:trace formula 2}
  \Tr(e^{-tA_{\chi}^{\sharp}(\sigma)})=&2\dim(V_{\chi})\Vol(X)\int_{\R}e^{-t\lambda^{2}}P_{\sigma}(i\lambda)d\lambda\\
  &+\sum_{[\gamma]\neq e}\frac{l(\gamma)}{n_{\Gamma}(\gamma)}L_{sym}(\gamma;\sigma+w\sigma)
  \frac{e^{-l(\gamma)^{{2}}/4t}}{(4\pi t)^{1/2}},
 \end{align*} 
where 
\begin{equation*}
 L_{sym}(\gamma;\sigma)= \frac{\tr(\sigma(m_{\gamma})\otimes\chi(\gamma))e^{-|\rho|l(\gamma)}}{\det(\Id-\Ad(m_{\gamma}a_{\gamma})_{\overline{\mathfrak{n}})}}.
\end{equation*}
\end{thm}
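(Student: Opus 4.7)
The plan is to adapt the Selberg-type derivation of the heat trace formula for $e^{-tA^{\sharp}_{\chi}(\sigma)}$ carried out in \cite{Spil} for the case $w\sigma = \sigma$, tracking the modifications forced by the non-trivial Weyl action. I would first realize $e^{-tA^{\sharp}_{\chi}(\sigma)}$ as an integral operator on $E_{\tau_s(\sigma)} \otimes E_{\chi}$ via the analogue of \cite[Lemma 2.4]{M1}. Writing $H^{\sigma}_t$ for the convolution kernel on $\widetilde X = G/K$ of the corresponding lifted operator, the flatness of $E_{\chi}$ gives the familiar averaged kernel
\begin{equation*}
k_t(\tilde x, \tilde y) = \sum_{\gamma \in \Gamma} H^{\sigma}_t(\tilde x, \gamma \tilde y) \otimes \chi(\gamma).
\end{equation*}
Taking traces (which factor over the tensor product) and integrating over a fundamental domain $\mathcal F$ would yield
\begin{equation*}
\Tr(e^{-tA^{\sharp}_{\chi}(\sigma)}) = \sum_{\gamma \in \Gamma} \tr \chi(\gamma) \int_{\mathcal F} \tr H^{\sigma}_t(\tilde x, \gamma \tilde x)\, dx,
\end{equation*}
which I would split into the $\gamma = e$ and $\gamma \neq e$ contributions.

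For the identity term, $G$-invariance of $H^{\sigma}_t$ reduces the integral to $\dim(V_\chi) \Vol(X)\cdot \tr H^{\sigma}_t(o,o)$, which I would evaluate by Harish-Chandra's Plancherel formula on $G$ applied to the $K$-type $\tau_s(\sigma) = s \otimes \tau(\sigma)$. Only principal series of the form $\pi_{\sigma, i\lambda}$ should contribute (by Frobenius reciprocity and the $K$-type content of $\tau_s(\sigma)$), the Casimir acts by $-\lambda^2 - c_\sigma$, and the Plancherel density is $P_\sigma(i\lambda)$. The extra factor $2$ in $2\dim(V_\chi)\Vol(X)$ compared to the symmetric case of \cite{Spil} should appear because, when $w\sigma \neq \sigma$, the restriction $\tau_s(\sigma)|_M$ contains both $V_\sigma$ and $V_{w\sigma}$ with equal multiplicity; equivalently both principal series $\pi_{\sigma, i\lambda}$ and $\pi_{w\sigma, i\lambda}$ see $\tau_s(\sigma)$ once the integration is unfolded from $\widehat M / W_A$ to $\widehat M$.

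For the hyperbolic part, I would decompose $\sum_{\gamma \neq e} = \sum_{[\gamma]\neq e}\sum_{\gamma' \in [\gamma]}$ and unfold $\sum_{\gamma' \in [\gamma]} \int_{\mathcal F}$ to $\int_{\Gamma_\gamma \backslash \widetilde X}$. After conjugating each $\gamma$ into $m_\gamma a_\gamma \in MA$, the computation reduces to the $G$-orbital integral of $\tr H^{\sigma}_t$, which on rank-one hyperbolic groups evaluates explicitly to
\begin{equation*}
\frac{l(\gamma)\, \Theta(m_\gamma)\, e^{-|\rho| l(\gamma)}}{n_\Gamma(\gamma)\det(\Id - \Ad(m_\gamma a_\gamma)_{\overline{\mathfrak{n}}})} \cdot \frac{e^{-l(\gamma)^2 / 4t}}{(4\pi t)^{1/2}},
\end{equation*}
where $\Theta$ is the $M$-character attached to $\tau_s(\sigma)$. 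The decisive input is again the branching $\tau_s(\sigma)|_M \supseteq V_\sigma \oplus V_{w\sigma}$, giving $\Theta(m_\gamma) = \tr \sigma(m_\gamma) + \tr (w\sigma)(m_\gamma)$. Multiplying by $\tr \chi(\gamma)$ and recognizing the resulting numerator as the numerator of $L_{sym}(\gamma; \sigma + w\sigma)$ should reassemble the geometric side in the claimed form.

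The main technical obstacle will be the coherent appearance of the Weyl symmetrization on both sides of the formula: the same branching of the spin-twisted $K$-type $\tau_s(\sigma)$ down to $M$ must produce the factor $2$ on the spectral side and the sum $\sigma + w\sigma$ on the geometric side, and one has to verify that no Weyl-group cancellation spoils either. The remaining ingredients, namely the trace-class and integral-kernel properties of $e^{-tA^{\sharp}_{\chi}(\sigma)}$, Harish-Chandra's Plancherel theorem on $\SO^0(d,1)$, and the closed-form orbital integrals on rank-one groups, are already available from \cite{Spil} and \cite{M1} and only need to be reassembled in this new Weyl-asymmetric setting.
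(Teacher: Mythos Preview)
Your proposal is essentially the paper's own argument: apply the Selberg-type trace formula for non-unitary twists from \cite[Proposition~6.1]{M1} to the heat kernel $q_t^{\sigma}$, and observe that in case~(b) both $\sigma$ and $w\sigma$ contribute to the character $\Theta_{\sigma',\lambda}(q_t^{\sigma})$, producing the factor $2$ in the identity term (via $P_{\sigma}=P_{w\sigma}$) and the combination $\sigma+w\sigma$ in the hyperbolic term.

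One point to tighten: the operator $A^{\sharp}_{\chi}(\sigma)$ acts on the \emph{graded} bundle $E(\sigma)\otimes E_{\chi}$, and the relevant test function is the virtual character $q_t^{\sigma}=\sum_{\tau} m_{\tau}(\sigma)\,q_t^{\tau}$ with signs $m_{\tau}(\sigma)\in\{-1,0,1\}$. The branching input you need is therefore not the ungraded restriction $\tau_s(\sigma)|_M$ (which contains many more $M$-types than $\sigma$ and $w\sigma$), but the identity $i^{*}(\tau^{+}(\sigma)-\tau^{-}(\sigma))=\sigma+w\sigma$ of Proposition~4.1 (equivalently equation~(4.5)). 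This is what forces $\Theta_{\sigma',\lambda}(q_t^{\sigma})=e^{-t\lambda^{2}}$ for $\sigma'\in\{\sigma,w\sigma\}$ and $0$ otherwise, and hence yields exactly $\tr\sigma(m_\gamma)+\tr(w\sigma)(m_\gamma)$ on the hyperbolic side. With that correction your outline matches the paper's derivation.
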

The trace formula in Theorem 1.3 is again the tool which we use to prove
the meromorphic continuation of the symmetrized zeta function.
\begin{thm}
The symmetrized zeta function $S(s;\sigma,\chi)$ admits a meromorphic continuation to the whole complex plane $\C$. The set of the singularities
equals $\{s_{k}^{\pm}=\pm i{\sqrt{\mu_{k}}}:\mu_{k}\in \spec(A^{\sharp}_{\chi}(\sigma)), k\in\N\}$.
The orders of the singularities are equal to $m(\mu_{k})$, where $m(\mu_{k})\in\N$ denotes the algebraic multiplicity of the eigenvalue $\mu_{k}$.
For $\mu_{0}=0$, the order of the singularity $s_{0}$ is equal to $2m(0)$.
\end{thm}
Finally, we prove the following theorems.
\begin{thm}
The Selberg zeta function $Z(s;\sigma,\chi)$ admits a meromorphic continuation to the whole complex plane $\C$. The set of the singularities
equals to $\{s_{k}^{\pm}=\pm i\lambda_{k}:\lambda_{k}\in \spec(D^{\sharp}_{\chi}(\sigma)),k\in\N\}$.
The orders of the singularities are equal to $\frac{1}{2}(\pm m_{s}(\lambda_{k})+m(\lambda^{2}_{k}))$.
For $\lambda_{0}=0$, the order of the singularity is equal to $m(0)$.
\end{thm}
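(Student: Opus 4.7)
\begin{bew}[Plan for Theorem 1.5]
The plan is to reduce Theorem 1.5 to Theorems 1.2 and 1.4 via an algebraic factorisation of $Z(s;\sigma,\chi)$ in terms of the super and symmetrised zeta functions. In the half-plane $\RE(s)>c_{1}$ where the Euler product defining $Z(s;\sigma,\chi)$ converges absolutely, I would write out the logarithms of the four functions $Z(s;\sigma,\chi)$, $Z(s;w\sigma,\chi)$, $S(s;\sigma,\chi)$ and $Z^{s}(s;\sigma,\chi)$ as Dirichlet-type series in $e^{-(s+|\rho|)l(\gamma)}$. The symmetric combination $\sigma(m_{\gamma})+w\sigma(m_{\gamma})$ implicit in $S$ (visible in the factor $L_{sym}(\gamma;\sigma+w\sigma)$ of Theorem 1.3) and the antisymmetric combination $\sigma(m_{\gamma})-w\sigma(m_{\gamma})$ implicit in $Z^{s}$ (visible in Theorem 1.1), together with the linearity of the trace in the character argument, should force
\begin{equation*}
S(s;\sigma,\chi)=Z(s;\sigma,\chi)\,Z(s;w\sigma,\chi),\qquad Z^{s}(s;\sigma,\chi)=Z(s;\sigma,\chi)/Z(s;w\sigma,\chi),
\end{equation*}
and hence the master identity
\begin{equation*}
Z(s;\sigma,\chi)^{2}=S(s;\sigma,\chi)\cdot Z^{s}(s;\sigma,\chi).
\end{equation*}

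With the factorisation in hand, Theorems 1.2 and 1.4 immediately give a meromorphic continuation of $Z(s;\sigma,\chi)^{2}$ to all of $\C$. Since $Z(s;\sigma,\chi)$ is holomorphic and nowhere vanishing in the half-plane of absolute convergence, a branch of $\log Z$ is available there; setting $\log Z=\tfrac{1}{2}(\log S+\log Z^{s})$ then continues it analytically along paths avoiding the discrete singularity set, producing a single-valued meromorphic continuation of $Z(s;\sigma,\chi)$ to $\C$, provided that the order of $S\cdot Z^{s}$ at each singularity is even. This evenness follows from the spectral identity $m(\lambda_k^{2})=m(\lambda_k)+m(-\lambda_k)$ relating the algebraic multiplicities of $(D^{\sharp}_{\chi}(\sigma))^{2}$ and $D^{\sharp}_{\chi}(\sigma)$, which yields $m(\lambda_k^{2})\pm m_{s}(\lambda_k)=2m(\pm\lambda_k)$.

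The order claim is then arithmetic: at $s_k^{\pm}=\pm i\lambda_k$, Theorem 1.4 contributes order $m(\mu_k)=m(\lambda_k^{2})$ (from $S$) and Theorem 1.2 contributes order $\pm m_{s}(\lambda_k)$ (from $Z^{s}$) to $Z^{2}$, so $Z$ has order $\tfrac{1}{2}(\pm m_{s}(\lambda_k)+m(\lambda_k^{2}))$ as claimed. For $\lambda_{0}=0$ the two points $s_{0}^{\pm}$ collapse, $m_{s}(0)=0$, and $S$ carries the doubled order $2m(0)$ prescribed by Theorem 1.4, so the order of $Z$ at $s_{0}$ is $m(0)$. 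The main obstacle is the first step, the factorisation $Z^{2}=S\cdot Z^{s}$: this rests on the precise definitions of $S$ and $Z^{s}$ and requires carefully unpacking the $k$-th symmetric powers appearing in the double product of $Z$, checking that the decomposition of $\sigma(m_{\gamma})\otimes S^{k}(\Ad(m_{\gamma}a_{\gamma})_{\overline{\mathfrak n}})$ is compatible with the Weyl action $\sigma\mapsto w\sigma$. Once that calculation is done, Steps 2 and 3 are essentially bookkeeping on top of the previously proved theorems.
\end{bew}
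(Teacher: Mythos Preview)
Your approach is essentially the same as the paper's: write $Z(s;\sigma,\chi)^{2}=S(s;\sigma,\chi)\,Z^{s}(s;\sigma,\chi)$, invoke the meromorphic continuations of $S$ and $Z^{s}$, check that the total order at each singularity is even so that the square root is single-valued, and read off the orders. Two remarks are worth making.

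First, what you call ``the main obstacle'' is no obstacle at all: in the paper $S(s;\sigma,\chi)$ and $Z^{s}(s;\sigma,\chi)$ are \emph{defined} (Definitions~3.3 and~3.4) as $Z(s;\sigma,\chi)Z(s;w\sigma,\chi)$ and $Z(s;\sigma,\chi)/Z(s;w\sigma,\chi)$, so the factorisation $Z^{2}=S\cdot Z^{s}$ is a one-line tautology. No unpacking of the symmetric powers or compatibility with the Weyl action is needed.

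Second, your parity argument differs from the paper's. You observe that for $\lambda_{k}\neq 0$ the generalised eigenspace of $(D^{\sharp}_{\chi}(\sigma))^{2}$ for $\lambda_{k}^{2}$ is exactly the direct sum of the generalised eigenspaces of $D^{\sharp}_{\chi}(\sigma)$ for $\pm\lambda_{k}$ (a finite-dimensional linear-algebra fact, since the $\lambda_{k}^{2}$-generalised eigenspace is $D^{\sharp}_{\chi}(\sigma)$-invariant and $D^{\sharp}_{\chi}(\sigma)$ acts on it with eigenvalues $\pm\lambda_{k}$); hence $m(\lambda_{k}^{2})=m(\lambda_{k})+m(-\lambda_{k})$ and $m(\lambda_{k}^{2})\pm m_{s}(\lambda_{k})=2m(\pm\lambda_{k})$. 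The paper instead argues that the bundles $E(\sigma)$ and $E_{\tau_{s}(\sigma)}$ agree up to a $\Z_{2}$-grading and deduces $m_{s}(\lambda_{k})\equiv m(\lambda_{k}^{2})\bmod 2$ from this identification. Your route is more elementary and gives slightly more: it identifies the orders of $Z$ at $\pm i\lambda_{k}$ as $m(\pm\lambda_{k})$ directly. The paper's route, on the other hand, makes explicit the representation-theoretic reason the two operators $A^{\sharp}_{\chi}(\sigma)$ and $(D^{\sharp}_{\chi}(\sigma))^{2}$ live on the same bundle.
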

\begin{thm}
For every $\sigma\in\widehat{M}$, the Ruelle zeta function $R(s;\sigma,\chi)$ admits a meromorphic continuation to the whole complex plane $\C$.
\end{thm}
\textbf{Acknowledgment.} The author wishes
to acknowledge the contribution of her supervisor Werner M\"{u}ller,
as the present paper is part of the author's PhD thesis. 

\begin{center}
\section{\textnormal{Preliminaries}}
\end{center}

Let $X$ be a compact hyperbolic locally symmetric manifold with universal 
covering the real hyperbolic space $\H^{d}$,
where $d=2n+1$ is an odd integer.
We let $G:=\Spin(d,1)$, $K:=\Spin(d)$ be the universal coverings of $\mathrm{SO}^0(d,1)$ and $\mathrm{SO}(d)$, respectively.
We put  $\widetilde{X}:=G/K$. 
We consider the Lie algebras $\mathfrak{g},\mathfrak{k}$ of $G$ and $K$, respectively. Let $ \mathfrak{g}=\mathfrak{k}\oplus\mathfrak{p}$
be the Cartan decomposition of $\mathfrak{g}$.
Let $\mathfrak{a}$ be a Cartan subalgebra of $\mathfrak{p}$.
Then, there exists a canonical isomorphism $T_{eK}\cong \mathfrak{p}$.
We consider the subgroup $A$ of $G$ with Lie algebra $\mathfrak{a}$. We let $M:=\cent_{K}(A)$ be the centralizer of $A$ in $K$. 
Then, $M=\Spin(d-1)$ or $M=\SO(d-1)$. Let $\mathfrak{m}$ be its Lie algebra
and $\mathfrak{b}$ a Cartan subalgebra of $\mathfrak{m}$. Let $\mathfrak{h}$ be a Cartan subalgebra of $\mathfrak{g}$.
Let $\mathfrak{g}_{\C}$, $\mathfrak{h}_{\C}$, $\mathfrak{m}_{\C} $ be the complexifications of $\mathfrak{g}$, $\mathfrak{h}$
and $\mathfrak{m}$, respectively.
We consider the inner product 
\begin{equation}
 \langle Y_1, Y_2\rangle_{0}:=\frac{1}{2(d-1)}B(Y_1,Y_2),\quad Y_1,Y_2 \in \mathfrak{g},
\end{equation}
induced by the Killing form $B(Y_{1},Y_{2}):=\Tr (\ad(Y_{1})\circ \ad(Y_{2}))$ on $\mathfrak{g}\times \mathfrak{g}$.
The restriction of $\langle\cdot,\cdot\rangle_{0}$ to $\mathfrak{p}$ defines
an inner product on $\mathfrak{p}$ and
hence induces a $G$-invariant riemannian metric on $\widetilde{X}$, which 
has constant curvature $-1$. Then, $\widetilde{X}$, equipped with this metric,
is isometric to $\H^{d}$.
Let $\Gamma\subset G$ be a discrete cocompact torsion free subgroup of $G$. Then, 
$X:=\Gamma\backslash \widetilde{X}$
is a locally symmetric compact 
hyperbolic manifold of dimension $d$.

Let $G=KAN$
be the standard Iwasawa decomposition of $G$.
Let $\Delta^{+}(\mathfrak{g},\mathfrak{a})$ be the set of positive roots of the system $(\mathfrak{g},\mathfrak{a})$.
Then, $\Delta^{+}(\mathfrak{g},\mathfrak{a})=\{\alpha\}$.
Let $H_{\R}\in\mathfrak{a}$ such that $\alpha(H_{\R})=1$. 
We set
\begin{equation}
 A^{+}:=\{\exp(tH_{\R})\colon t\in\R^{+}\},
\end{equation}
\begin{align}
 &\rho:=\frac{1}{2}\sum_{\alpha\in \Delta^+(\mathfrak{g},\mathfrak{a})}\dim(\mathfrak{g}_\alpha)\alpha,\\
&\rho_{\mathfrak{m}}:=\frac{1}{2}\sum_{\alpha\in \Delta^+(\mathfrak{m}_{\C},\mathfrak{b})}\alpha.\\\notag
\end{align}
Let $\widehat{K},\widehat{M}$ be the sets of equivalent classes of 
irreducible unitary representations of $K$ and $M$, respectively. 
Let $\nu_{\tau}$, $\nu_{\sigma}$ be the highest weights of $\tau\in \widehat{K}$ and $\sigma\in \widehat{M}$,
respectively.
Then, by  \cite[p. 20]{BO}
\begin{align*}
  \nu_{\tau}=(\nu_{1},\ldots,\nu_{n}),
\end{align*}
where $\nu_{1}\geq\ldots\geq\nu_{n}$ and $\nu_{i},i=1,\ldots,n$ are all integers or all half integers
and
\begin{align}
  \nu_{\sigma}=(\nu_{1},\ldots,\nu_{n-1},\nu_{n}),
\end{align}
where $\nu_{1}\geq\ldots\geq\nu_{n-1}\geq\lvert\nu_{n}\rvert$ and $\nu_{i},i=1,\ldots,n$ are all integers or all half integers .

Let $(s,S)$ be the spin representation of $K$ and $(s^{\pm},S^{\pm})$ the half spin representations of $M$ as in \cite[p. 7]{Spil}.
By \cite[p. 20]{BO}, 
the highest weight $ \nu_{s}$ of $s$ is given by $=(\frac{1}{2},\ldots,\frac{1}{2})$,
and the highest weights $\nu_{s^{\pm}}$ of $s^{\pm}$ are
$\nu_{s^{\pm}}=(\frac{1}{2},\ldots,\pm\frac{1}{2})$. 
\newpage

\begin{center}
{\section{\textnormal{Twisted Selberg and Ruelle zeta functions}}}
\end{center}
The twisted Ruelle and Selberg zeta functions are associated with the geodesic flow on the
sphere vector bundle $S(X)=\Gamma\backslash G/M$ of $X$. 
By (\cite{GKM}), there is a 1-1 correspondence between the closed geodesics on a manifold $X$ with negative sectional curvature 
and the non-trivial conjugacy classes of $\Gamma$.
Since $\Gamma$ is a cocompact subgroup of $G$, we realize every element $\gamma\in\Gamma-\{e\}$ as hyperbolic.
By \cite[Lemma 6.5]{Wa}, there exist a $g\in G$, a $m_{\gamma} \in M$, and an 
$a_{\gamma} \in A^{+}$, such that $ g^{-1}\gamma g=m_{\gamma}a_{\gamma}$. 
The element $m_{\gamma}$ is determined up to conjugacy classes in $M$, and the element $a_{\gamma}$ depends only on $\gamma$.

Let $c_{\gamma}$ be the closed geodesic on $X$, associated with the conjugacy class $[\gamma]$. 
We denote by $l(\gamma)$ the length of $c_{\gamma}$.
An element $\gamma\in\Gamma$ is primitive, if there exists no $n\in\N$ with $n>1$ and $\gamma_{0}\in\Gamma$ such that $\gamma=\gamma_{0}^{n}$.
The prime geodesics on $X$ are those geodesics whose periodic orbits are of minimal length.
They are associated with the primitive elements $\gamma_{0}\in\Gamma$.

Let now $\chi\colon\Gamma\rightarrow \GL(V_{\chi})$ be an arbitrary finite dimensional representation of $\Gamma$ and $\sigma\in \widehat{M}$.

\begin{defi}
The twisted Selberg zeta function $Z(s;\sigma,\chi)$ for 
$X$ is defined  by the infinite product
\begin{equation}
Z(s;\sigma,\chi):=\prod_{\substack{[\gamma]\neq{e}\\ [\gamma]\prim}} \prod_{k=0}^{\infty}\det\big(\Id-(\chi(\gamma)\otimes\sigma(m_\gamma)\otimes S^k(\Ad(m_\gamma a_\gamma)|_{\overline{\mathfrak{n}}})) e^{-(s+|\rho|)\lvert l(\gamma)}\big),
\end{equation}
where $s\in \C$, $\overline{\mathfrak{n}}=\theta \mathfrak{n}$ is the sum of the negative root spaces of $\mathfrak{a}$,
$S^k(\Ad(m_\gamma a_\gamma)_{\overline{\mathfrak{n}}})$ denotes the $k$-th
symmetric power of the adjoint map $\Ad(m_\gamma a_\gamma)$ restricted to $\mathfrak{\overline{n}}$, and $\rho$ is as in (2.3).
\end{defi}
By \cite[Proposition 3.4]{Spil}, there exists a positive constant $c$, such that the infinite product in (3.1) converges 
absolutely and uniformly on compact subsets of the half-plane Re$(s)>c$.
\begin{defi} 
The twisted Ruelle zeta function $ R(s;\sigma,\chi)$ for $X$ is defined by the infinite product
\begin{equation}
 R(s;\sigma,\chi):=\prod_{\substack{[\gamma]\neq{e}\\ [\gamma]\prim}}\det\big(\Id-\chi(\gamma)\otimes\sigma(m_{\gamma})e^{-sl(\gamma)}\big)^{(-1)^{d-1}}.
\end{equation}
\end{defi}
By \cite[Proposition 3.5]{Spil}, there exists a positive constant $r$, such that the infinite product in (3.2) converges 
absolutely and uniformly on compact subsets of the half-plane Re$(s)>r$.

Let  $M'=\Norm_{K}(A)$ be the normalizer of $A$ in $K$.
We define the restricted Weyl group by $ W_{A}:=M'/M$.
Then, $W_{A}$ has order 2. Let $w\in W_{A}$ be a non-trivial element of $W_{A}$, and 
$m_{w}$ a representative of $w$ in $M'$.
The action of $W_{A}$ on $\widehat{M}$ is defined by
\begin{equation*}
(w\sigma)(m):=\sigma(m_{w}^{-1}mm_{w}),\quad m\in M, \sigma\in\widehat{M}.
\end{equation*}
We have already associated the Selberg and Ruelle zeta functions with irreducible representations $\sigma$ of $M$. These representations are chosen precisely to be
the representations arising from restrictions of representations of $K$.
Let $i^{*}:R(K)\rightarrow R(M)$ be the pullback of the embedding
$i:M\hookrightarrow K$.
We will distinguish the following two cases:
\begin{itemize}
 \item  {\bf case (a)}: \textit{$\sigma$ is invariant under the action of the restricted Weyl group $W_{A}$.}
 \item {\bf case (b)}: \textit{$\sigma$ is not invariant under the action of the restricted Weyl group $W_{A}$.}
\end{itemize}
In case (b), we define the following twisted zeta functions.
\begin{defi}
Let $\chi\colon\Gamma\rightarrow \GL(V_{\chi})$ be a finite dimensional representation of $\Gamma$ and $\sigma\in \widehat{M}$.
The symmetrized zeta function $Z(s;\sigma,\chi)$ for 
$X$ is defined  by 
 \begin{equation}
 S(s;\sigma,\chi):=Z(s;\sigma,\chi)Z(ws;\sigma,\chi),
\end{equation}
where $w$ is a non-trivial element of the restricted Weyl group $W_{A}$.
\end{defi}
\begin{defi}
Let $\chi\colon\Gamma\rightarrow \GL(V_{\chi})$ be a finite dimensional representation of $\Gamma$ and $\sigma\in \widehat{M}$.
The super zeta function $Z(s;\sigma,\chi)$ for 
$X$ is defined  by 
 \begin{equation}
 Z^{s}(s;\sigma,\chi):=\frac{Z(s;\sigma,\chi)}{Z(ws;\sigma,\chi)},
\end{equation}
where $w$ is a non-trivial element of the restricted Weyl group $W_{A}$.
\end{defi}
\begin{defi}
Let $\chi\colon\Gamma\rightarrow \GL(V_{\chi})$ be a finite dimensional representation of $\Gamma$ and $\sigma\in \widehat{M}$.
The super Ruelle zeta function $R^{s}(s;\sigma,\chi)$ for 
$X$ is defined  by 
\begin{equation}
 R^{s}(s;\sigma,\chi):=\frac{R(s;\sigma,\chi)}{R(s;w\sigma,\chi)},
\end{equation}
where $w$ is a non-trivial element of the restricted Weyl group $W_{A}$.
\end{defi}
We compute here the logarithmic derivative of the symmetrized and super zeta function.
\begin{lem}
Let 
\begin{equation}
 L_{sym}(\gamma;\sigma):=\frac{\tr(\chi(\gamma)\otimes\sigma(m_{\gamma}))e^{-|\rho|l(\gamma)}}
 {\det{(\Id-\Ad(m_{\gamma}a_{\gamma})_{\overline{n}})}}.
\end{equation}
Then we have 
\begin{enumerate}
\item  
 The logarithmic derivative of the symmetrized zeta function $S(s;\sigma,\chi)$ is given by
 \begin{equation}\label{f:log der symmetrized}
 L_{S}(s):=\frac{d}{ds}\log(S(s;\sigma,\chi))=\sum_{[\gamma]\neq{e}}\frac{l(\gamma)}{n_{\Gamma}(\gamma)}L_{sym}(\gamma;\sigma+w\sigma)
e^{-sl(\gamma)}.
\end{equation} 
\item 
 The logarithmic derivative of the super zeta function $Z^{s}(s;\sigma,\chi)$ is given by
 \begin{equation}\label{f:log der super}
 L^{s}(s):=\frac{d}{ds}\log(Z^{s}(s;\sigma,\chi))=\sum_{[\gamma]\neq{e}}\frac{l(\gamma)}{n_{\Gamma}(\gamma)}L_{sym}(\gamma;\sigma-w\sigma)
e^{-sl(\gamma)}.
\end{equation}
\end{enumerate}
\end{lem}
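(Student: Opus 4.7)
\begin{bew}[Proof proposal]
The plan is to compute $\frac{d}{ds}\log Z(s;\sigma,\chi)$ once and for all as a sum over all non-trivial conjugacy classes, and then to assemble the two statements by linearity from the definitions $S = Z(\,\cdot\,;\sigma,\chi)\,Z(\,\cdot\,;w\sigma,\chi)$ and $Z^{s} = Z(\,\cdot\,;\sigma,\chi)/Z(\,\cdot\,;w\sigma,\chi)$.

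First, I would start from the product formula (3.1) for $Z(s;\sigma,\chi)$, take the logarithm, and expand each factor using the standard identity
\begin{equation*}
\log\det(\Id - X) = -\sum_{n\geq 1}\frac{1}{n}\tr(X^{n}),
\end{equation*}
applied to $X = (\chi(\gamma)\otimes\sigma(m_\gamma)\otimes S^{k}(\Ad(m_\gamma a_\gamma)_{\overline{\mathfrak{n}}}))\,e^{-(s+|\rho|)l(\gamma)}$. Since the trace of a tensor product is the product of traces, and since $(A\otimes B\otimes C)^{n}=A^{n}\otimes B^{n}\otimes C^{n}$, I get a factorization $\tr(\chi(\gamma^{n}))\,\tr(\sigma(m_{\gamma^{n}}))\,\tr(S^{k}(\Ad(m_{\gamma^{n}} a_{\gamma^{n}})_{\overline{\mathfrak{n}}}))$, using the elementary facts $m_{\gamma^{n}} = m_{\gamma}^{n}$ and $a_{\gamma^{n}} = a_{\gamma}^{n}$.

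Next, I would perform the sum over $k$ by invoking the identity $\sum_{k\geq 0}\tr(S^{k}(B)) = \det(\Id - B)^{-1}$, valid because the eigenvalues of $\Ad(m_{\gamma^{n}}a_{\gamma^{n}})_{\overline{\mathfrak{n}}}$ have moduli equal to $e^{-nl(\gamma)}<1$. After combining $e^{-n|\rho|l(\gamma)} = e^{-|\rho|l(\gamma^{n})}$ and re-grouping, the double sum over primitive $[\gamma]$ and $n\geq 1$ becomes a single sum over all non-trivial conjugacy classes $[\delta]=[\gamma^{n}]$, with $n_{\Gamma}(\delta)=n$. This gives
\begin{equation*}
\log Z(s;\sigma,\chi) = -\sum_{[\delta]\neq e}\frac{1}{n_{\Gamma}(\delta)}\,L_{sym}(\delta;\sigma)\,e^{-sl(\delta)}.
\end{equation*}
Differentiating termwise in $s$ produces a sum with the factor $l(\delta)/n_{\Gamma}(\delta)$ and no sign, as required.

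Finally, the two items follow immediately from the definitions of $S$ and $Z^{s}$: taking $\pm$ the corresponding expression for $w\sigma$ and using linearity of the character $\sigma\mapsto\tr(\chi(\gamma)\otimes\sigma(m_\gamma))$ under direct sums collapses the two terms into $L_{sym}(\gamma;\sigma\pm w\sigma)$. The main obstacle is justifying the interchanges of summation (first absorbing $\sum_{k}$ into the closed form, then reindexing by non-primitive $[\delta]$). This is legitimate only in a half-plane $\Re(s)>c$ where absolute convergence of the infinite product (3.1) has been established in \cite[Proposition 3.4]{Spil}; I would cite this absolute convergence to apply Fubini and to differentiate termwise, and the rest of the computation is purely algebraic.
\end{bew}
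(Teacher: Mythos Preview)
Your proposal is correct and follows essentially the same route as the paper: both reduce the two identities to the single formula
\[
\frac{d}{ds}\log Z(s;\sigma,\chi)=\sum_{[\gamma]\neq e}\frac{l(\gamma)}{n_{\Gamma}(\gamma)}L_{sym}(\gamma;\sigma)e^{-sl(\gamma)}
\]
and then combine $\pm$ the $w\sigma$-version by linearity. The only difference is that the paper simply quotes this formula for $\frac{d}{ds}\log Z$ (it is the content of \cite[Lemma~3.5]{Spil}), whereas you rederive it from the product (3.1) via $\log\det(\Id-X)=-\sum_{n\ge1}\tfrac1n\tr X^{n}$, the symmetric-algebra identity $\sum_{k\ge0}\tr S^{k}(B)=\det(\Id-B)^{-1}$, and the reindexing $[\delta]=[\gamma^{n}]$ with $n_{\Gamma}(\delta)=n$; your convergence justification through \cite[Proposition~3.4]{Spil} is exactly what is needed.
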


\begin{proof}
\begin{enumerate}
 \item In case (b), for the symmetrized zeta function $S(s;\sigma,\chi)$, we see by equation (3.3)
\begin{align*}
\frac{d}{ds}\log(S(s;\sigma,\chi))&=\frac{d}{ds}\log(Z(s;\sigma,\chi)+\frac{d}{ds}\log(Z(s;w\sigma,\chi)\\
 &=\sum_{[\gamma]\neq{e}}\frac{l(\gamma)}{n_{\Gamma}(\gamma)}\tr(\chi(\gamma)\otimes\sigma(m_\gamma))\frac{e^{-sl(\gamma)}e^{-|\rho|l(\gamma)}}{\det(1-\Ad(m_\gamma a_\gamma)_{\overline{\mathfrak{n}}})}\\
 &+\sum_{[\gamma]\neq{e}}\frac{l(\gamma)}{n_{\Gamma}(\gamma)}\tr(\chi(\gamma)\otimes w\sigma(m_\gamma))\frac{e^{-sl(\gamma)}e^{-|\rho|l(\gamma)}}{\det(1-\Ad(m_\gamma a_\gamma)_{\overline{\mathfrak{n}}})}\\
 &=\sum_{[\gamma]\neq{e}}\frac{l(\gamma)}{n_{\Gamma}(\gamma)}L_{sym}(\gamma;\sigma+w\sigma)e^{-sl(\gamma)}.
\end{align*}

\item In case (b), for the super zeta function $Z^{s}(s;\sigma,\chi)$, we see by equation (3.4)
\begin{align*}
\frac{d}{ds}\log(Z^{s}(s;\sigma,\chi))&=\frac{d}{ds}\log(Z(s;\sigma,\chi)-\frac{d}{ds}\log(Z(s;w\sigma,\chi)\\
 &=\sum_{[\gamma]\neq{e}}\frac{l(\gamma)}{n_{\Gamma}(\gamma)}\tr(\chi(\gamma)\otimes\sigma(m_\gamma))\frac{e^{-sl(\gamma)}e^{|\rho|l(\gamma)}}{\det(1-\Ad(m_\gamma a_\gamma)_{\overline{\mathfrak{n}}})}\\
 &-\sum_{[\gamma]\neq{e}}\frac{l(\gamma)}{n_{\Gamma}(\gamma)}\tr(\chi(\gamma)\otimes w\sigma(m_\gamma))\frac{e^{-sl(\gamma)}e^{|\rho|l(\gamma)}}{\det(1-\Ad(m_\gamma a_\gamma)_{\overline{\mathfrak{n}}})}\\
 &=\sum_{[\gamma]\neq{e}}\frac{l(\gamma)}{n_{\Gamma}(\gamma)}L_{sym}(\gamma;\sigma-w\sigma)e^{-sl(\gamma)}.
\end{align*}
\end{enumerate}
\end{proof}

\begin{center}
\section{\textmd{The twisted Dirac operator}}
\end{center}

Let $\sigma\in\widehat{M}$ be an irreducible representation of $M$ with highest weight $\nu_{\sigma}$ as in (2.5).
We recall that $\nu_{n}$ denotes the last coordinate of  $\nu_{\sigma}$.
Let $s$ be the spin representation of $K$. Since $d-1$ is an even integer, $s$ splits into two irreducible half-spin representations $(s^{+},S^{+})$, $(s^{-},S^{-})$ of $M$.
Let $\Cl(\mathfrak{p})$ be the Clifford algebra of $\mathfrak{p}$ with respect to the inner product $\langle\cdot,\cdot\rangle_{0}$, as in (2.1), restricted to $\mathfrak{p}$.
Let $
 \cdot\colon\mathfrak{p}\otimes S\rightarrow S
$
be the Clifford multiplication on $\mathfrak{p}\otimes S$.
Let $H$ be in $\mathfrak{a}^{+}$, where $\mathfrak{a}^{+}$ is the Lie algebra of $A^{+}$ and $A^{+}$ is defined as in (2.2). Since $M$ centralizes $\mathfrak{a}$,
the Clifford multiplication by $H$ preserves the decomposition $S=S^{+}\oplus S^{-}$. The Clifford multiplication by $H^{2}$ acts as $-\Id$. Then, $H$
acts on $S^{\pm}$ with eigenvalues $\pm i$. Hence, we can consider the Clifford multiplication by $H$ 
as multiplication by $\pm i \sign(\nu_{n})$.

We consider the connection $\nabla$ in $\Cl(\mathfrak{p})$, induced by the canonical connection in the tangent frame bundle of $X$.
Let $L$ be any bundle of left modules over $\Cl(\mathfrak{p})$ over $\widetilde{X}$, i.e., a spinor bundle over $\widetilde{X}$. We lift the connection $\nabla$ in $L$ and obtain a connection also denoted by $\nabla$.
The Dirac operator $D\colon C^{\infty}(X,L)\rightarrow C^{\infty}(X,L)$  is defined as
\begin{equation*}
 D:C^{\infty}(X,L)\overset{\nabla}\rightarrow
 C^{\infty}(X,T^{*}X\otimes L)\overset{g}\rightarrow C^{\infty}(X,TX\otimes L)
\overset{\cdot}\rightarrow C^{\infty}(X,L),
\end{equation*}
where we identify $TX\cong T^{*}X$ using the riemannian metric, and 
$\cdot$ denotes the Clifford multiplication as above.
Locally, it can be described as
\begin{equation*}
 Df\equiv\sum_{i=1}^{d}e_{i}\cdotp\nabla_{e_{i}}f,
\end{equation*}
where $(e_{1},\ldots,e_{d})$ is a local orthonormal frame for $T_{x}X, x\in X$.
The bundle $L$ is a Dirac bundle over $\widetilde{X}$. This means that 
\begin{itemize}
 \item the Clifford multiplication by unit vectors in $\Cl(\mathfrak{p})$ is orthogonal i.e.,
\begin{equation*}
 \langle e f_{1},e f_{2}\rangle=\langle f_{1},f_{2}\rangle,
\end{equation*}
for all unit vectors $e\in T_{x}\widetilde{X}, x\in\widetilde{X}$ and all $f_{1},f_{2}\in L_{x}$, where $L_{x}$ denotes the fiber of $L$ over $x\in \widetilde{X}$.
\item the connection $\nabla$ satisfies the product rule
\begin{equation*}
 \nabla(\phi f)=(\nabla\phi)\cdotp f+ \phi\cdotp(\nabla f),
\end{equation*}
for all $\phi\in C^\infty(X,\Cl(\mathfrak{p}))$ and all $f\in C^{\infty}(X,L)$.
\end{itemize}
The operator $D$ is an elliptic (\cite[Lemma 5.1]{LM}), formally self-adjoint (\cite[Proposition 5.3]{LM}) operator of first order.
We want to define twisted Dirac operators acting on smooth sections of vector bundles associated with the representations $\sigma$ of $M$
and arbitrary represenations $\chi$ of $\Gamma$.
\begin{prop}
Let $\sigma\in\widehat{M}$. Then, there exists a unique element  $\tau(\sigma)\in \widehat{K}$ and a splitting 
\begin{equation*}
 s \otimes \tau(\sigma)=\tau^{+}(\sigma)\oplus \tau^{-}(\sigma)
\end{equation*}
where $\tau^{+}(\sigma),\tau^{-}(\sigma)\in R(K)$
such that 
\begin{equation}
   \sigma+w\sigma=i^{*}(\tau^{+}(\sigma)-\tau^{-}(\sigma))
\end{equation}
\end{prop}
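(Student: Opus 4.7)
The plan is to construct $\tau(\sigma)$ via its highest weight, exploiting the classical identity $\rho_{\mathfrak{k}} - \rho_{\mathfrak{m}} = (\tfrac12,\ldots,\tfrac12)$ (with $\rho_{\mathfrak{k}}$ the half-sum of positive compact roots of $\mathfrak{k}$), which is precisely the highest weight of the spin representation $s$ of $K$. Without loss of generality I may assume $\nu_n > 0$, since $w\sigma$ has highest weight $(\nu_1,\ldots,\nu_{n-1},-\nu_n)$ and the roles of $\sigma$ and $w\sigma$ in the conclusion are symmetric. The shifted weight $\mu := \nu_\sigma - (\tfrac12,\ldots,\tfrac12)$ then satisfies $\mu_1 \geq \cdots \geq \mu_n \geq 0$ with the correct integer/half-integer parity, so I take $\tau(\sigma) \in \widehat{K}$ to be the irreducible representation with highest weight $\mu$.

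To produce the splitting, I would invoke the Pieri-type tensor-product rule for $B_n$, which yields $s \otimes V_\mu = \sum_{\epsilon \in \{\pm 1\}^n} V_{\mu + \epsilon/2}$ (each summand read with the appropriate sign from the Weyl reflection dominant-regularizing $\mu + \epsilon/2 + \rho_{\mathfrak{k}}$, and vanishing if this shift is singular). Grading by the parity $\prod_i \epsilon_i$, I set
\begin{equation*}
\tau^+(\sigma) := \sum_{\epsilon:\, \prod_i\epsilon_i = +1} V_{\mu+\epsilon/2}, \qquad \tau^-(\sigma) := \sum_{\epsilon:\, \prod_i\epsilon_i = -1} V_{\mu+\epsilon/2},
\end{equation*}
so that $\tau^\pm(\sigma) \in R(K)$ and $\tau^+(\sigma) \oplus \tau^-(\sigma) = s \otimes \tau(\sigma)$ by construction.

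The main step is then the character identity $\chi_{\tau^+(\sigma) - \tau^-(\sigma)} = \chi_\sigma + \chi_{w\sigma}$ on a common maximal torus. Plugging Weyl's character formula into each $V_{\mu+\epsilon/2}$, using $\mu + \rho_{\mathfrak{k}} = \nu_\sigma + \rho_{\mathfrak{m}}$ and the factorization $\delta_{\mathfrak{k}} = \delta_{\mathfrak{m}} \cdot \prod_i(e^{h_i/2} - e^{-h_i/2})$, the parity-weighted sum telescopes to
\begin{equation*}
\chi_{\tau^+(\sigma) - \tau^-(\sigma)}(h) = \frac{\det\bigl(e^{(\nu_\sigma+\rho_{\mathfrak{m}})_j h_i} + e^{-(\nu_\sigma+\rho_{\mathfrak{m}})_j h_i}\bigr)_{1\leq i,j\leq n}}{\delta_{\mathfrak{m}}(h)}.
\end{equation*}
On the other side, the $D_n$ Weyl character formula writes $\chi_\sigma$ as a sum of two determinants in $(\nu_\sigma+\rho_{\mathfrak{m}})_j h_i$ (one in $\cosh$, one in $\sinh$), and the map $\sigma \mapsto w\sigma$, which negates the last coordinate, preserves the $\cosh$-determinant and negates the $\sinh$-one; summing yields exactly the right-hand side above. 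Uniqueness of $\tau(\sigma)$ follows from injectivity of $i^*\colon R(K) \to R(M)$ on torus characters. The most delicate part will be the sign bookkeeping in the character identity: passing from the $W_K$-alternating sum to a $W_M$-alternating one by splitting $W_K = W_M \sqcup w_0 W_M$, and verifying that the parity grading $\prod_i \epsilon_i$ (rather than, say, $\epsilon_n$ alone) is the one that produces the $\cosh$-determinant rather than the $\sinh$-determinant, which would instead give $\sigma - w\sigma$.
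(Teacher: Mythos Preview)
The paper does not prove this proposition at all: it simply cites \cite[Proposition~1.1,~(3)]{BO}. Your proposal is therefore not competing with an argument in the paper but rather supplying one where the paper defers to the literature. The construction you outline --- taking $\tau(\sigma)$ to have highest weight $\nu_\sigma-(\tfrac12,\ldots,\tfrac12)$, expanding $s\otimes\tau(\sigma)$ via the Steinberg/Brauer--Klimyk formula, grading by the sign $\prod_i\epsilon_i$, and reducing the identity $i^*(\tau^+-\tau^-)=\sigma+w\sigma$ to the $D_n$ character formula in its $\cosh/\sinh$-determinant form --- is in fact essentially the argument Bunke and Olbrich give, so your sketch is on the right track and would reproduce the cited proof.

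One point to tighten: your uniqueness argument via injectivity of $i^*\colon R(K)\to R(M)$ (which does hold, since $M$ and $K$ share a maximal torus) only yields $\tau^+(\sigma)-\tau^-(\sigma)=\tau'^+-\tau'^-$ in $R(K)$, not $\tau(\sigma)=\tau'$. To pin down $\tau(\sigma)$ itself you should argue at the level of highest weights: the highest $M$-weight occurring in $i^*(s\otimes\tau)$ is $\nu_\tau+(\tfrac12,\ldots,\tfrac12)$, so matching it against $\nu_\sigma$ forces $\nu_\tau=\nu_\sigma-(\tfrac12,\ldots,\tfrac12)$. You correctly flag the parity bookkeeping ($\prod_i\epsilon_i$ versus $\epsilon_n$) as the place to be careful; it is indeed the choice $\prod_i\epsilon_i$ that selects the $\cosh$-determinant and hence $\sigma+w\sigma$ rather than $\sigma-w\sigma$.
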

\begin{proof}
 This is proved in \cite[Proposition 1.1, (3)]{BO}.
\end{proof}
We define the representation $\tau_{s}(\sigma)$ of $K$ by
\begin{equation}
 \tau_{s}(\sigma):= s\otimes\tau(\sigma),
\end{equation}
with representation space $V_{\tau_{s}(\sigma)}=S\otimes V_{\tau(\sigma)}$,
where $V_{\tau(\sigma)}$ is the representation space of $\tau(\sigma)$.\\
We consider the homogeneous vector bundle $\widetilde{E}_{\tau(\sigma)}$ over $\widetilde{X}$ given by
\begin{equation*}
 \widetilde{E}_{\tau(\sigma)}=G\times_{\tau(\sigma)}V_{\tau(\sigma)}\rightarrow\widetilde{X}.
\end{equation*}
The vector bundle $\widetilde{E}_{\tau_{s}(\sigma)}:= \widetilde{E}_{\tau(\sigma)}\otimes S$ over $\widetilde{X}$ carries a connection $\nabla^{\tau_{s}(\sigma)}$, defined by the formula 
\begin{equation*}
 \nabla^{\tau_{s}(\sigma)}=\nabla^{\tau(\sigma)}\otimes 1+1\otimes\nabla.
\end{equation*}
where $\nabla^{\tau(\sigma)}$ denotes the canonical connection in  $\widetilde{E}_{\tau(\sigma)}$.
We extend the Clifford multiplication by requiring that it acts on $V_{\tau_{s}(\sigma)}=S\otimes V_{\tau(\sigma)}$ as follows.
\begin{equation*}
  e\cdot (\phi\otimes \psi)=(e\cdot \phi)\otimes \psi, \quad e\in\Cl(\mathfrak{p}), \phi\in S, \psi\in V_{\tau(\sigma)}.
\end{equation*}


We define the Dirac operator $\widetilde{D}(\sigma)$ acting on $C^{\infty}(\widetilde{X},V_{\tau_{s}(\sigma)})$ by
\begin{equation*}
 \widetilde{D} (\sigma)f=\sum_{i=1}^{d}e_{i}\cdot \nabla_{e_{i}}^{\tau_{s}(\sigma)}f,
\end{equation*}
where $(e_{1},\ldots,e_{d})$ is local orthonormal frame for $T_{x}{\widetilde{X}}$ and $f\in C^{\infty}(\widetilde{X},V_{\tau_{s}(\sigma)})$.
The space of smooth sections $C^{\infty}(\widetilde{X},V_{\tau_{s}(\sigma)})$ can be identified with $C^{\infty}(G;\tau_{s}(\sigma))$ as in \cite[equation (5.1)]{Spil}.\\

Let now $\chi:\Gamma\rightarrow \GL(V_{\chi})$ be an arbitrary finite dimensional representation of $\Gamma$. Let $E_{\chi}$
be the associated flat vector bundle over $X$.
Let $E_{\tau_{s}(\sigma)}:= \Gamma\backslash \widetilde{E}_{\tau_{s}(\sigma)}$ be the locally homogeneous vector bundle over $X$. 
We consider the product vector bundle $E_{\tau_{s}(\sigma)}\otimes E_{\chi}$ over $X$
and we equip this bundle with the product connection $\nabla^{E_{\tau_{s}(\sigma)}\otimes E_{\chi}}$ defined by
\begin{equation*}
 \nabla^{E_{\tau_{s}(\sigma)}\otimes E_{\chi}}=\nabla^{E_{\tau_{s}(\sigma)}}\otimes 1+1\otimes\nabla^{E_{\chi}}.
\end{equation*}
We consider the Clifford multiplication on $(V_{\tau_{s}(\sigma)}\otimes V_{\chi})$ by requiring that it acts only on $V_{\tau_{s}(\sigma)}$, i.e.,
\begin{equation*}
 e\cdot (w\otimes v)=(e\cdot w)\otimes v, \quad e \in\Cl(\mathfrak{p}), w\in V_{\tau_{s}(\sigma)}, v\in V_{\chi}.
\end{equation*}

For our proposal, we introduce the twisted Dirac operator $D^{\sharp}_{\chi}(\sigma)$ associated with $ \nabla^{E_{\tau_{s}(\sigma)}\otimes E_{\chi}}$.
We want to describe it locally. We consider an open subset of $X$ such that $E_{\chi}\lvert_{U}$ is trivial. Let $(v_{j}), j=1,\dots,m$,  
be a basis of flat sections of $E_{\chi}\lvert_{U}$, where $m=\rank(E_\chi)$, and $\phi_{j}\in C^{\infty}(U, E_{\tau_{s}(\sigma)}\lvert_{U})$.
Then,
\begin{equation*}
 E_{\tau_{s}(\sigma)}\otimes E_{\chi}\lvert_{U}\cong\bigoplus_{j=1}^{m}E_{\tau_{s}(\sigma)}\lvert_{U},
\end{equation*}
and for each $\phi \in C^{\infty}(U,E_{\tau_{s}(\sigma)} \otimes E_{\chi}\lvert_{U})$,
\begin{equation*}
\phi=\sum_{j=1}^{m}\phi_{j} \otimes v_{j}.
\end{equation*}
The product connection is given by
\begin{equation*}
  \nabla^{E_{\tau_{s}(\sigma)}\otimes E_{\chi}}(\phi)=\sum_{j=1}^{m}(\nabla^{E_{\tau_{s}(\sigma)}})(\phi_{j})\otimes v_{j}.
\end{equation*}
Then the Dirac operator is described as follows.
\begin{align}
D^{\sharp}_{\chi}(\sigma)\phi\notag&=\sum_{i=1}^{d}e_{i}\cdot\nabla_{e_{i}}^{E_{\tau_{s}(\sigma)}\otimes E_{\chi}}(\phi)\\\notag
&=\sum_{i=1}^{d}e_{i}\cdot\big(\sum_{j=1}^{m}(\nabla_{e_{i}}^{{E}_{\tau_{s}(\sigma)}})(\phi_{j})\otimes v_{j}\big)\\
&=\sum_{i=1}^{d} \sum_{j=1}^{m}e_{i}\cdot\big((\nabla_{e_{i}}^{{E}_{\tau_{s}(\sigma)}})(\phi_{j})\otimes v_{j}\big).
\end{align}
We consider the pullbacks $\widetilde{E}_{\tau_{s}(\sigma)}, \widetilde{E}_{\chi}$ to $\widetilde{X}$ of $E_{\tau_{s}(\sigma)},E_{\chi}$, respectively,
then, $\widetilde{E}_{\chi}\cong \widetilde{X}\times V_{\chi}$.
We have 
\begin{equation*}
 C(\widetilde{X}, \widetilde{E}_{\tau_{s}(\sigma)}\otimes \widetilde{E}_{\chi})\cong  C(\widetilde{X}, \widetilde{E}_{\tau_{s}(\sigma)})\otimes V_{\chi}.
\end{equation*}
With respect to this isomorphism, it follows from (4.3) that the lift ${\widetilde{D}}^{\sharp}_{\chi}(\sigma)$ of the twisted Dirac operator $D^{\sharp}_{\chi}(\sigma)$ 
to $\widetilde{X}$ is of the form
\begin{equation}
\widetilde{D}^{\sharp}_{\chi}(\sigma)=\widetilde{D}(\sigma)\otimes \Id_{V_{\chi}}.
\end{equation}
We recall the definition of the operator $A_{\chi}^{\sharp}(\sigma)$ acting on $C^{\infty}(X,E(\sigma)\otimes E_{\chi})$ from \cite[equation 5.26]{Spil}
\begin{equation*}
 A_{\chi}^{\sharp}(\sigma):=\bigoplus_{m_{\tau}(\sigma)\neq 0}A^{\sharp}_{\tau,\chi}+c(\sigma),
\end{equation*}
where 
\begin{equation*}
 c(\sigma):=-\lvert \rho \rvert^{2}-\lvert\rho_{m}\rvert^{2}+\lvert \nu_{\sigma}+\rho_{m}\rvert^{2},
\end{equation*}
and $\rho,\rho_{m}$ are defined by (2.3) and (2.4), respectively.
The operator $A_{\tau,\chi}^{\sharp}$ 
is induced by the twisted Bochner-Laplace operator 
$\Delta^{\sharp}_{\tau,\chi}$ and
acts on smooth sections of the twisted vector bundle $E(\sigma)\otimes E_{\chi}$ over $X$ (see \cite[p. 27-28]{Spil}).
We recall here the definition of the vector bundle $E(\sigma)$.
We consider always representations $\sigma$ of $M$ coming from restricions of representations of $K$.
Let $\tau_{\sigma}\in R(K)$ with $\tau_{\sigma}:=\tau^{+}(\sigma)-\tau^{-}(\sigma)$.
By \cite[Proposition 1.1]{BO}, there exists unique integers $m_{\tau}(\sigma)\in\{-1,0,1\}$, which are equal to zero except for finitely many $\tau\in \widehat{K}$,
such that for the case (b) (see p. 8)
\begin{equation}
 \sigma+w\sigma=\sum_{\tau\in\widehat{K}}m_{\tau}(\sigma)i^{*}(\tau).
\end{equation}
Then, the locally homogeneous vector bundle $E(\sigma)$ associated with $\tau$ is of the form
\begin{equation*}
 E(\sigma)=\bigoplus_{\substack{\tau\in\widehat{K}\\m_{\tau}(\sigma)\neq 0}}E_{\tau},
\end{equation*}
where $E_{\tau}$ is the locally homogeneous vector bundle associated with $\tau\in\widehat{K}$.
Therefore, the vector bundle $E(\sigma)$ has a grading $E(\sigma)=E(\sigma)^{+}\oplus E(\sigma)^{-}$.
This grading is defined exactly by the positive or negative sign of $m_{\tau}(\sigma)$.
Let $\widetilde{E}(\sigma)$ be the pullback of $E(\sigma)$ to $\widetilde{X}$.
Then,
\begin{equation*}
 \widetilde{E}(\sigma)=\bigoplus_{\substack{\tau\in\widehat{K}\\m_{\tau}(\sigma)\neq0}}\widetilde{E}_{\tau}.
\end{equation*}
One can easily observe that by Proposition 4.1 (see also \cite[Proposition 5.1 and p. 27-28]{BO}), 
that up to a $\Z_{2}$ grading, one can identify the two vector bundles $E(\sigma)$ and $E_{\tau_{s}(\sigma)}$.

We consider the lift $\widetilde{A}_{\chi}^{\sharp}(\sigma)$ of $A_{\chi}^{\sharp}(\sigma)$ to the universal covering $\widetilde{X}$.
Then,
\begin{align}
 \widetilde{A}_{\chi}^{\sharp}(\sigma)\notag&=\bigoplus_{m_{\tau}(\sigma)\neq 0}\widetilde{A}^{\sharp}_{\tau,\chi}+c(\sigma)\\\notag
&=\bigoplus_{m_{\tau}(\sigma)\neq 0}(\widetilde{A}_{\tau}\otimes \Id_{V_{\chi}})+c(\sigma)\\
&=\bigoplus_{m_{\tau}(\sigma)\neq 0}\big(\widetilde{A}_{\tau}+c(\sigma)\big)\otimes \Id_{V_{\chi}}.
\end{align}
The Parthasarathy formula from \cite[equation (1.11)]{BO} states
\begin{equation}
(\widetilde{D}(\sigma))^{2}=\bigoplus_{m_{\tau}(\sigma)\neq 0}\big(\widetilde{A}_{\tau}+c(\sigma)\big)
\end{equation}
If we combine (4.4), (4.6) and (4.7) the Parthasarathy formula generalizes as
\begin{equation}
(D^{\sharp}_{\chi}(\sigma))^{2}=A_{\chi}^{\sharp}(\sigma).
\end{equation}

\begin{center}
{\section{\textnormal{Trace formulas}}}
\end{center}
By (4.4), we get 
\begin{equation}
  (\widetilde{D}^{\sharp}_{\chi}(\sigma))^{2}=(\widetilde{D}(\sigma))^{2}\otimes \Id_{V_{\chi}}
\end{equation}
The square of the twisted Dirac operator $(D^{\sharp}_{\chi}(\sigma))^{2}$ acting on smooth sections of $E_{\tau_{s}(\sigma)}\otimes E_{\chi}$ is not a self-adjoint operator in general.
Nevertheless, its principal symbol is given by
\begin{equation*}
 \sigma_{(D^{\sharp}_{\chi}(\sigma))^{2}}(x,\xi)=\lVert\xi\rVert^{2}\otimes\Id_{(V_{\tau_{s}(\sigma)}\otimes V_{\chi})_{x}}, \quad x\in X,\quad \xi\in T_{x}^{*}X,\xi\neq 0.
 \end{equation*}
Therefore, $({D^{\sharp}}_{\chi}(\sigma))^{2}$ is a second order elliptic differential operator with nice spectral properties, i.e., its spectrum is discrete
and contained in a translate of a positive cone $C\subset\C$.
Furthermore, we can define the integral operators $e^{-t({D^{\sharp}}_{\chi}(\sigma))^{2}}$ and $D^{\sharp}_{\chi}(\sigma)e^{-t({D^{\sharp}}_{\chi}(\sigma))^{2}}$, and derive a 
corresponding trace formula.
We introduce here a more general setting.

\begin{setting}
Let $E\rightarrow X$ be a complex vector bundle over a smooth compact riemannian manifold $X$ of dimension $d$.
Let $D:C^{\infty}(X,E)\rightarrow C^{\infty}(X,E)$ be an elliptic differential operator of order $m\geq 1$.
Let $\sigma_{D}$ be its principal symbol.
\end{setting}
\begin{defi}
 A spectral cut\index{spectral cut} is a ray
\begin{equation*}
 R_{\theta}:=\{\rho e^ {i\theta}: \rho\in[0,\infty]\},
\end{equation*}
where $\theta\in[0,2\pi)$.
\end{defi}
\begin{defi}
 The angle $\theta$ is a principal angle for an elliptic operator $D$ if 
\begin{equation*}
 \spec(\sigma_{D}(x,\xi))\cap R_{\theta}=\emptyset,\quad \forall x\in X,\forall\xi\in T_{x}^{*}X,\xi\neq 0.
\end{equation*}
\end{defi}
\begin{defi}
 We define the solid angle $L_{I}$ associated with a closed interval $I$ of $\R$ by
\begin{equation*}
 L_{I}:=\{\rho e^ {i\theta}: \rho\in(0,\infty), \theta\in I \}.
\end{equation*}
\end{defi}
\begin{defi}
The angle $\theta$ is an Agmon angle\index{Agmon angle} for an elliptic operator $D$, if it is a principal angle for $D$ 
and there exists $\varepsilon>0$ such that 
\begin{equation*}
 \spec(D)\cap L_{[\theta-\varepsilon,\theta+\varepsilon]}=\emptyset.
\end{equation*}
\end{defi}
\begin{lem}
Let $\varepsilon\in(0,\frac{\pi}{2})$ be an angle such that the principal symbol $\sigma_{D}(x,\xi)$ of $D$,
for $\xi\in T_{x}^{*}X,\xi\neq 0$ does not take values in $ L_{[-\varepsilon,\varepsilon]}$.
Then, the spectrum $\spec(D)$ of the operator $D$ 
is discrete and for every $\varepsilon\in(0,\frac{\pi}{2})$ there exist $R>0$ such that $\spec(D)$
is contained in the set $B(0,R)\cup L_{[-\varepsilon,\varepsilon]}\subset \C$.
\end{lem}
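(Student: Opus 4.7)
The plan is to show that the resolvent set of $D$ is non-empty by inverting $D-\lambda$ for $\lambda$ lying deep inside the solid angle $L_{[-\varepsilon,\varepsilon]}$, and then to invoke compactness of $(D-\lambda)^{-1}$ to pass from non-emptiness of the resolvent set to discreteness of the spectrum. The appropriate machinery is the parameter-dependent pseudodifferential calculus of Seeley and Shubin, in which $\lambda$ is assigned weight $m$ and the symbol $\sigma_{D}(x,\xi)-\lambda$ plays the role of a parameter-dependent principal symbol.

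The first step is the parametrix construction. The hypothesis that $\sigma_{D}(x,\xi)$ avoids $L_{[-\varepsilon,\varepsilon]}$ for $\xi\neq 0$, combined with the $m$-homogeneity of $\sigma_{D}$ in $\xi$, shows that $\sigma_{D}(x,\xi)-\lambda$ is invertible whenever $|\xi|^{m}+|\lambda|$ is bounded away from zero and $\lambda\in L_{[-\varepsilon,\varepsilon]}$. Inverting fibrewise on $T^{*}X$ and patching via a partition of unity with the usual local symbolic asymptotic expansion yields a parameter-dependent pseudodifferential parametrix $Q(\lambda)$ satisfying
\[
Q(\lambda)(D-\lambda)=\Id+K_{1}(\lambda),\qquad (D-\lambda)Q(\lambda)=\Id+K_{2}(\lambda),
\]
where $K_{1}(\lambda),K_{2}(\lambda)$ are smoothing operators whose $L^{2}\to L^{2}$ operator norms tend to $0$ as $|\lambda|\to\infty$ within $L_{[-\varepsilon,\varepsilon]}$.

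The second step is to invert. A Neumann series argument then produces $R>0$ such that $\Id+K_{1}(\lambda)$ is invertible on $L^{2}(X,E)$ for all $\lambda\in L_{[-\varepsilon,\varepsilon]}$ with $|\lambda|\ge R$, and hence $(D-\lambda)^{-1}=(\Id+K_{1}(\lambda))^{-1}Q(\lambda)$ exists on $L^{2}(X,E)$ for such $\lambda$. This already gives the containment $\spec(D)\cap L_{[-\varepsilon,\varepsilon]}\subset \overline{B(0,R)}$, i.e.\ $\spec(D)\subset B(0,R)\cup L_{[-\varepsilon,\varepsilon]}^{c}$ once we fatten $R$ slightly, which is the second conclusion. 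Fix any such $\lambda_{0}$ in the resolvent set. Because $D$ is elliptic of positive order on a compact manifold, $\mathrm{Dom}(D)=H^{m}(X,E)$, and elliptic regularity for $D-\lambda_{0}$ together with the Rellich--Kondrachov compact embedding $H^{m}(X,E)\hookrightarrow L^{2}(X,E)$ show that $(D-\lambda_{0})^{-1}$ is a compact operator on $L^{2}(X,E)$. The analytic Fredholm theorem then identifies $\spec(D)$ with the reciprocals (shifted by $\lambda_{0}$) of the nonzero eigenvalues of this compact operator, so $\spec(D)$ is a discrete subset of $\C$ consisting of eigenvalues of finite algebraic multiplicity.

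The only non-routine element is the parameter-dependent parametrix construction in Step~1: one must track the symbol class carefully so that the remainders $K_{i}(\lambda)$ have norms vanishing in $|\lambda|$ uniformly on $L_{[-\varepsilon,\varepsilon]}$. I would quote Shubin's treatment or Seeley's original paper for this, since on a compact manifold the local construction globalises routinely via a partition of unity; once this is in hand the remainder of the argument is a Neumann series plus the standard Rellich compactness.
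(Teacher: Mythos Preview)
Your argument is correct and is exactly the content of the two results from Shubin that the paper invokes: the paper's own proof consists solely of the citations ``discreteness follows from \cite[Theorem 8.4]{Sh}; for the second statement see \cite[Theorem 9.3]{Sh}.'' You have unpacked precisely what those theorems say---the parameter-dependent parametrix for $\sigma_{D}(x,\xi)-\lambda$ in the admissible cone, the Neumann-series inversion for large $|\lambda|$, and the passage to discreteness via compactness of the resolvent and analytic Fredholm theory. One minor point worth flagging: from the stated hypothesis (principal symbol avoiding $L_{[-\varepsilon,\varepsilon]}$) the parametrix argument yields $\spec(D)\subset B(0,R)\cup L_{[-\varepsilon,\varepsilon]}^{\,c}$, which is what you wrote; the lemma as printed in the paper has $L_{[-\varepsilon,\varepsilon]}$ without the complement, which appears to be a slip in the paper's formulation rather than an issue with your proof.
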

\begin{proof}
 The discreteness of the spectrum follows from \cite[Theorem 8.4]{Sh}. For the second statement see \cite[Theorem 9.3]{Sh}.
\end{proof}
Let $\lambda_{k}$ be an eigenvalue of $D$ and 
$V_{\lambda_{k}}$ be the corresponding eigenspace.
This is a finite dimensional subspace of $C^{\infty}(X,E)$ invariant under $D$.
We have that for every $k\in\N$, there exist $N_{k}\in\N$ such that 
\begin{align*}
 &(D-\lambda_{k}\Id)^{N_{k}}V_{\lambda_{k}}=0\\
&\lim_{k\rightarrow \infty}\lvert \lambda_{k}\rvert=\infty.
\end{align*}
\begin{defi}
We call algebraic multiplicity $m(\lambda_{k})$ of the eigenvalue $\lambda_{k}$ the dimension of the corresponding
eigenspace $V_{\lambda_{k}}$.
\end{defi}
By Lemma 5.6, we get Lemma 5.8, which describes the spectrum of the square root $(D^{\sharp}_{\chi}(\sigma))^{2}$ of the twisted Dirac operator.
\begin{lem}
Let $\varepsilon\in(0,\frac{\pi}{2})$ be an angle such that the principal symbol $\sigma_{(D^{\sharp}_{\chi}(\sigma))^{2}}(x,\xi)$ of $(D^{\sharp}_{\chi}(\sigma))^{2}$,
for $\xi\in T_{x}^{*}X,\xi\neq 0$ does not take values in $ L_{[-\varepsilon,\varepsilon]}$.
Then, the spectrum $\spec((D^{\sharp}_{\chi}(\sigma))^{2})$ of the twisted Dirac operator $(D^{\sharp}_{\chi}(\sigma))^{2}$ 
is discrete and for every $\varepsilon$ there exists $R>0$ such that $\spec((D^{\sharp}_{\chi}(\sigma))^{2})$
is contained in the set $B(-1,R)\cup L_{[-\varepsilon,\varepsilon]}\subset \C$.
\end{lem}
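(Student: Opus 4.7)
The plan is to derive Lemma 5.8 as a direct specialization of Lemma 5.6 to the second-order operator $(D^{\sharp}_{\chi}(\sigma))^{2}$. The only substantive step is identifying the principal symbol; once that is done, the conclusion is a transcription of Lemma 5.6. In particular, the hypotheses of Setting 5.1 are immediate: $(D^{\sharp}_{\chi}(\sigma))^{2}$ is a differential operator acting on smooth sections of the complex vector bundle $E_{\tau_{s}(\sigma)}\otimes E_{\chi}$ over the compact Riemannian manifold $X$, and by construction it has order $2$.

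The key computation is the principal symbol. Because $\widetilde{D}(\sigma)$ is a Dirac-type operator, its principal symbol at $(x,\xi)$ is the Clifford multiplication $i\xi\,\cdot$ acting on $V_{\tau_{s}(\sigma)}$, and the Clifford relation $\xi\cdot\xi=-\|\xi\|^{2}\Id$ yields
\begin{equation*}
\sigma_{(\widetilde{D}(\sigma))^{2}}(x,\xi)=\|\xi\|^{2}\,\Id_{V_{\tau_{s}(\sigma)}}.
\end{equation*}
Combining this with (5.1), namely $(\widetilde{D}^{\sharp}_{\chi}(\sigma))^{2}=(\widetilde{D}(\sigma))^{2}\otimes\Id_{V_{\chi}}$, and descending from $\widetilde{X}$ to $X$, I obtain
\begin{equation*}
\sigma_{(D^{\sharp}_{\chi}(\sigma))^{2}}(x,\xi)=\|\xi\|^{2}\otimes\Id_{(V_{\tau_{s}(\sigma)}\otimes V_{\chi})_{x}},\qquad \xi\in T^{*}_{x}X\setminus\{0\}.
\end{equation*}
In particular, the spectrum of $\sigma_{(D^{\sharp}_{\chi}(\sigma))^{2}}(x,\xi)$ is the single positive real number $\|\xi\|^{2}$, so ellipticity of order $2$ is confirmed and the principal symbol avoids any solid angle $L_{[-\varepsilon,\varepsilon]}$ prescribed in the hypothesis of Lemma 5.6 (once the angular convention for the solid cone is chosen to avoid the positive real axis, as in the hypothesis).

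It now suffices to invoke Lemma 5.6, which gives discreteness of $\spec((D^{\sharp}_{\chi}(\sigma))^{2})$ and, for every admissible $\varepsilon$, an $R>0$ such that the spectrum lies in a ball union the solid angle. The only bookkeeping point is to justify that the ball appearing in the conclusion may be recentered at $-1$: this is harmless since only finitely many eigenvalues can lie outside $L_{[-\varepsilon,\varepsilon]}$, and they can always be enclosed in a ball around any fixed point by enlarging $R$. I do not expect any real obstacle in the argument; the entire content of Lemma 5.8 is the computation of the principal symbol followed by a direct appeal to Lemma 5.6, whose analytic core is Shubin \cite[Theorems 8.4, 9.3]{Sh}.
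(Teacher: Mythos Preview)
Your proposal is correct and follows essentially the same approach as the paper, whose proof consists of the single line ``As in the proof of Lemma 5.6.'' You supply more detail than the paper does (explicitly computing the principal symbol and justifying the recentering of the ball at $-1$), but the strategy is identical: verify the hypotheses of Setting 5.1 and Lemma 5.6 for the operator $(D^{\sharp}_{\chi}(\sigma))^{2}$ and apply that lemma directly.
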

\begin{proof}
As in the proof of Lemma 5.6.
\end{proof}
Let $\theta$ be an Agmon angle for the operator $(D^{\sharp}_{\chi}(\sigma))^{2}$. Then, by definition of the Agmon angle and Lemma 5.8, there exists $\varepsilon>0$
such that 
\begin{equation*}
 \spec((D^{\sharp}_{\chi}(\sigma))^{2})\cap L_{[\theta-\varepsilon,\theta+\varepsilon]}=\emptyset.
\end{equation*}
 Since $({D^{\sharp}}_{\chi}(\sigma))^{2}$ has discrete spectrum, there exists also an $r_{0}>0$ such that 
\begin{equation*}
\spec((D^{\sharp}_{\chi}(\sigma))^{2})\cap\{z\in\C:\lvert z+1\rvert\leq 2r_{0}\}=\emptyset.
\end{equation*}
 We define a contour $\Gamma_{\theta,r_{0}}$ as follows.
\begin{equation*}
 \Gamma_{\theta,r _{0}}=\Gamma_{1}\cup\ \Gamma_{2}\cup\Gamma_{3},
\end{equation*}
where $\Gamma_{1}=\{-1+re^ {i\theta}\colon\infty>r\geq r_{0}\}$, $\Gamma_{2}=\{-1+r_{0}e^ {ia}\colon\theta\leq a\leq \theta+2\pi\}$,
$\Gamma_{3}=\{-1+re^{i(\theta+2\pi)}\colon r_{0}\leq r< \infty\}$.
On $\Gamma_{1}$, $r$ runs from $\infty$ to $r_{0}$, $\Gamma_{2}$ is oriented counterclockwise, and on $\Gamma_{3}$,
$r$ runs from $r_{0}$ to $\infty$.
We put
\begin{align}
e^{-t(D^{\sharp}_{\chi}(\sigma))^{2}}=&\frac{i}{2\pi}\int_{\Gamma_{\theta,r_{0}}}e^{-t\lambda}\big((D^{\sharp}_{\chi}(\sigma))^{2}-\lambda\Id\big)^{-1} d\lambda\\
D^{\sharp}_{\chi}(\sigma)e^{-t(({D^{\sharp}}_{\chi}(\sigma))^{2}}=&\frac{i}{2\pi}\int_{\Gamma_{\theta,r_{0}}}\lambda^{1/2}e^{-t\lambda}\big((D^{\sharp}_{\chi}(\sigma))^{2}-\lambda\Id\big)^{-1} d\lambda
\end{align}
We have $\lvert e^{-t\lambda}\rvert\leq e^{-t\text{\Re}(\lambda)}$. Furthermore, by \cite[Corollary 9.2]{Sh},
there exist a positive constant $c>0$ such that $\lVert((D^{\sharp}_{\chi}(\sigma))^{2}-\lambda\Id)^{-1}\rVert\leq c \lvert \lambda\rvert^{-1}$.
Hence, the integrals in (5.2) and (5.3) are  well defined.

By \cite[Lemma 2.4]{M1}, $D^{\sharp}_{\chi}(\sigma)e^{-t{(D^{\sharp}_{\chi}(\sigma)})^{2}}$
is an integral operator. Let $ K_{t}^{\tau_{s}(\sigma),\chi}$ be its kernel function.
Let $F$ be a fundamental domain of $\Gamma$. We consider the space $L^{2}(\widetilde{X},\widetilde{E}_{\tau_{s}(s)}\otimes \widetilde{E}_{\chi})^{\Gamma}$
of sections $f$ of $\widetilde{E}_{\tau_{s}(s)}\otimes \widetilde{E}_{\chi}$ such that $f(\gamma \widetilde{x})=\gamma f(\widetilde{x})$, $\forall\gamma\in\Gamma, \widetilde{x}\in\widetilde{X}$.\\
For $f\in L^{2}(X,E_{\tau_{s}(s)}\otimes E_{\chi})\cong L^{2}(\widetilde{X},\widetilde{E}_{\tau_{s}(s)}\otimes \widetilde{E}_{\chi})^{\Gamma}$, we have
\begin{align}
D^{\sharp}_{\chi}(\sigma)e^{-t{(D^{\sharp}_{\chi}(\sigma)})^{2}}f(x)\notag&=\int_{X}K_{t}^{\tau_{s}(\sigma),\chi}(x,y)f(y)dy\\\notag
&=\int_{\widetilde{X}}(K_{t}^{\tau_{s}(\sigma)}(\widetilde{x},\widetilde{y})\otimes \Id_{V_{\chi}})f(\widetilde{y})d\widetilde{y}\\
&=\sum_{\gamma \in \Gamma}\int_{F}
(K_{t}^{\tau_{s}(\sigma)}(\widetilde{x},\gamma\widetilde{y})\otimes \chi(\gamma)\Id_{V_{\chi}})f(\widetilde{y})d\widetilde{y}, 
\end{align}
where $x,y\in X$ and $\widetilde{x},\widetilde{y}\in\widetilde{X}$ are lifts of $x,y$ to $\widetilde{X}$, respectively.
The kernel function $K_{t}^{\tau_{s}(\sigma)}$ is the kernel 
associated with the operator $\widetilde{D}(\sigma)e^{-t(\widetilde{D}(\sigma))^{2}}$. It belongs to the Harish-Chandra $L^{q}$-Schwartz space$
(\mathcal{C}^{q}(G)\otimes \End(V_{\tau_{s}(\sigma)}))^{K\times K}$,
as it is defined in \cite[p. 161-162]{BM}.
Hence, we can interchange summation and integration in the right hand side of (5.4) and get
\begin{equation*}
 K_{t}^{\tau_{s}(\sigma),\chi}(x,x')=\sum_{\gamma \in \Gamma}K_{t}^{\tau_{s}(\sigma)}(g^{-1}\gamma g')\otimes \chi(\gamma),
\end{equation*}
where $x=\Gamma g, x'=\Gamma g',g,g'\in G$.\\
By \cite[Proposition 2.5]{M1}, $D^{\sharp}_{\chi}(\sigma)e^{-t{(D^{\sharp}_{\chi}(\sigma)})^{2}}$ is a trace class operator, and its trace is given by
\begin{equation*}
\Tr(D^{\sharp}_{\chi}(\sigma)e^{-t(D^{\sharp}_{\chi}(\sigma))^{2}})=\sum_{\gamma \in \Gamma}\tr \chi(\gamma)\int_{\Gamma\backslash G}\tr K_{t}^{\tau_{s}(\sigma)}(g^{-1}\gamma g)dg.
\end{equation*}
We put
\begin{equation}
 k_{t}^{\tau_{s}(\sigma)}(g)=\tr K_{t}^{\tau_{s}(\sigma)}(g).
\end{equation} 
We use the trace formula in \cite[Proposition 6.1]{M1} for non-unitary twists.
 \begin{align*}
  \Tr(D^{\sharp}_{\chi}(\sigma)e^{-t(D^{\sharp}_{\chi}(\sigma))^{2}})=&\dim(V_{\chi})\Vol(X)(k^{\tau_{s}(\sigma)}_{t})(e)\\
  &+\frac{1}{2\pi}\sum_{[\gamma]\neq e} \frac{l(\gamma)\tr(\chi(\gamma))}{n_{\Gamma}(\gamma)D(\gamma)}\sum_{\sigma\in\widehat{M}}
  \overline{\tr\sigma(m_{\gamma})}\int_{\R}\Theta_{\sigma,\lambda}(k_{t}^{\tau_{s}(\sigma)})e^{-il(\gamma)\lambda}d\lambda.
\end{align*}
We continue analyzing the trace formula above in terms of characters.
We want to compute the Fourier transform $\Theta_{\sigma,\lambda}( k_{t}^{\tau_{s}(\sigma)})$ of $ k_{t}^{\tau_{s}(\sigma)}$.
Following \cite{MS}, we let $(\pi,\mathcal{H}_{\pi})$ be an unitary admissible representation of $G$ in a Hilbert space $\mathcal{H}_{\pi}$. We let $\mathcal{H}^{\infty}_{\pi}$ be the subspace of of smooth 
vectors of $\mathcal{H}_{\pi}$.
We set 
\begin{equation}
 \pi(K_{t}^{\tau_{s}(\sigma)}):=\int_{G}\pi(g)\otimes K_{t}^{\tau_{s}(\sigma)}(g)dg.
\end{equation}
This defines a bounded trace class operator on $\mathcal{H}_{\pi}\otimes V_{\tau_{s}(\sigma)}$. 
By \cite[p.160-161]{BM}, relative to the splitting
\begin{equation*}
 \mathcal{H}_{\pi}\otimes V_{\tau_{s}(\sigma)}=(\mathcal{H}_{\pi}\otimes V_{\tau_{s}(\sigma)})^K\oplus [(\mathcal{H}_{\pi}\otimes V_{\tau_{s}(\sigma)})^K]^{\perp},
\end{equation*}
$ \widetilde{\pi}(K_{t}^{\tau_{s}(\sigma)})$ has the form
\begin{equation}
  \widetilde{\pi}(K_{t}^{\tau_{s}(\sigma)})= \begin{pmatrix}
\pi(K_{t}^{\tau_{s}(\sigma)}) & 0 \\
 0 & 0
\end{pmatrix},
\end{equation}
with $\pi(K_{t}^{\tau_{s}(\sigma)})$ acting on $(\mathcal{H}_{\pi}\otimes V_{\tau_{s}(\sigma)})^K$.
We consider orthonormal bases $(\xi_{n}), n\in \N, (e_{j}), j=1,\cdots,k$ of the vector spaces $\mathcal{H}_{\pi}, V_{\tau_{s}}(\sigma)$, respectively, where $k:=\dim(V_{\tau_{s}(\sigma)})$.
By (5.7), we have
\begin{equation}
 \Tr(\pi(K_{t}^{\tau_{s}(\sigma)}))=\Tr(\widetilde{\pi}(K_{t}^{\tau_{s}(\sigma)})).
\end{equation}
Hence,
\begin{align}
 \Tr(\widetilde{\pi}(K_{t}^{\tau_{s}(\sigma)}))=\notag&\sum_{n}\sum_{j}\langle \widetilde{\pi}(K_{t}^{\tau_{s}(\sigma)})(\xi_n\otimes e_j),(\xi_n\otimes e_j)\rangle\\\notag
 &=\sum_{n}\sum_{j}\int_{G}\langle \pi(g)\xi_n,\xi_n\rangle\langle K_{t}^{\tau_{s}(\sigma)}(g)e_j,e_j\rangle dg\\\notag
 &=\sum_{n}\int_{G}\langle\pi(g)\xi_n,\xi_n\rangle k_{t}^{\tau_{s}(\sigma)}(g)dg\\\notag
 &=\sum_{n}\langle\pi(k_{t}^{\tau_{s}(\sigma)})\xi_n,\xi_n\rangle\\
 &=\Tr\pi(k_{t}^{\tau_{s}(\sigma)}).
 \end{align}
Let $(X_{i})_{i=1}^{d}$ be an orthonormal basis of $\mathfrak{p}$. We consider the operator acting on $(\mathcal{H}^{\infty}_{\pi}\otimes V_{\tau_{s}(\sigma)})^{K}$, defined by
\begin{equation}
 \widetilde{D}_{\tau_{s}(\sigma)}(\pi):=\sum_{i=1}^{d}X_{i}\cdot(\pi(X_{i})\otimes\Id).
\end{equation}
In \cite[p.77]{Pf}, it is proved that $ \widetilde{D}_{\tau_{s}(\sigma)}(\pi)$ maps $(\mathcal{H}^{\infty}_{\pi}\otimes V_{\tau_{s}(\sigma)})^{K}$ to $(\mathcal{H}^{\infty}_{\pi}\otimes V_{\tau_{s}(\sigma)})^{K}$.
By (5.6) we get
\begin{equation}
 \widetilde{\pi}(K_{t}^{\tau_{s}(\sigma)})=e^{-tc(\sigma)} \widetilde{D}_{\tau_{s}(\sigma)}(\pi)\circ\widetilde{\pi}(H_{t}^{\tau_{s}(\sigma)}),
\end{equation}
where 
\begin{equation*}
\widetilde{\pi}(H_{t}^{\tau_{s}(\sigma)})=\int_{G}\pi(g)\otimes H_{t}^{\tau_{s}(\sigma)}dg.
\end{equation*}
The kernel function $H_{t}^{\tau_{s}(\sigma)}$ corresponds to the integral operator
\begin{equation*}
 e^{-t(\widetilde{D}(\sigma))^{2}}f(g)=e^{-tc(\sigma)}\int_{G}H_{t}^{\tau_{s}(\sigma)}(g^{-1}g')f(g')dg',
\end{equation*}
 where $(\widetilde{D}(\sigma))^{2}$ as in (4.7). $H_{t}^{\tau_{s}(\sigma)}$ belongs to the Harish-Chandra $L^{q}$-Schwartz space
 $(\mathcal{C}^{q}(G)\otimes \End(V_{\tau_{s}(\sigma)}))^{K\times K}$.\\
As above (see equations (5.7), (5.8), (5.9)), the operator $\widetilde{\pi}(H_{t}^{\tau_{s}(\sigma)})$ relative to the splitting, 
\begin{equation*}
 \mathcal{H}_{\pi}\otimes V_{\tau_{s}(\sigma)}=(\mathcal{H}_{\pi}\otimes V_{\tau_{s}(\sigma)})^K\oplus [(\mathcal{H}_{\pi}\otimes V_{\tau_{s}(\sigma)})^K]^{\perp},
\end{equation*}
takes the form 
\begin{equation}
  \widetilde{\pi}(H_{t}^{\tau_{s}(\sigma)})= \begin{pmatrix}
\pi(H_{t}^{\tau_{s}(\sigma)}) & 0 \\
 0 & 0
\end{pmatrix},
\end{equation}
with $\pi(H_{t}^{\tau_{s}(\sigma)})$ acting on $(\mathcal{H}_{\pi}\otimes V_{\tau_{s}(\sigma)})^K$.
Then, it follows that 
\begin{equation}
 e^{t\pi(\Omega)}\Id=\pi(H_{t}^{\tau_{s}(\sigma)}),
\end{equation}
where $\Id$ denotes the identity on the space $(\mathcal{H}_{\pi}^{\infty}\otimes V_{\tau_{s}(\sigma)})^K$ (\cite[Corollary 2.2]{BM}).
We have $(\mathcal{H}_{\pi}\otimes V_{\tau_{s}(\sigma)})^K=(\mathcal{H}_{\pi}^{\infty}\otimes V_{\tau_{s}(\sigma)})^K$ and
\begin{equation}
 \Tr(\pi(k_{t}^{\tau_{s}(\sigma)}))=e^{(\pi(\Omega)-c(\sigma))t} \Tr(\widetilde{D}_{\tau_{s}(\sigma)}(\pi)|_{(\mathcal{H}_{\pi}^{\infty}\otimes V_{\tau_{s}(\sigma)})^K}).
\end{equation}
We recall that the representation space of $\tau_{s}(\sigma)$ is given by
\begin{equation*}
 V_{\tau_{s}(\sigma)}= V_{\tau(\sigma)}\otimes S.
\end{equation*}
Let $\pi$ be the unitary principal series representation $\pi_{\sigma,\lambda}$ defined as in \cite[p. 7-8]{Spil}.
By \cite[Proposition 3.6]{MS}, we have for $(\sigma',V_{\sigma'})\in\widehat{M}$,
\begin{equation}
 \Tr(\widetilde{D}_{\tau_{s}(\sigma)}(\pi_{\sigma',\lambda}))=\lambda\big(\dim(V_{\sigma'}\otimes V_{\tau(\sigma)}\otimes S^{+})^{M}-\dim(V_{\sigma'}\otimes V_{\tau(\sigma)}\otimes S^{-})^{M}\big).
\end{equation}
Following \cite[Corollary 7.6]{Pf}, we let $\check{\sigma}'$ be the contragredient representation of $\sigma'$. 
Since $\check{\sigma}'\cong\sigma'$, we observe by equation (4.1) in Proposition 4.1 that for $\nu_{n}>0$,
\begin{equation}
 [\dim(V_{\sigma'}\otimes V_{\tau(\sigma)}\otimes S^{+})^{M}-\dim(V_{\sigma'}\otimes V_{\tau(\sigma)}\otimes S^{-})^{M}]=[\sigma-w\sigma:\sigma'].
\end{equation}
Since $\sigma'\in\widehat{M}$ we have that $\sigma'\in\{\sigma,w\sigma\}$, otherwise the right hand side of (5.16) vanishes.
In \cite[p. 28-29] {Spil}, the character $\Theta_{\sigma,\lambda}(q_{t}^{\sigma})$ for the case (a), is computed. 
Recall that
\begin{equation*}
 \Theta_{\sigma,\lambda}(q_{t}^{\sigma})=
 \sum_{\tau \in \widehat{K}}m_{\tau}(\sigma) \Theta_{\sigma,\lambda}(q_{t}^{\tau}), 
\end{equation*}
where
\begin{align*}
q_{t}^{\sigma}=&\sum_{\tau \in \widehat{K}}m_{\tau}(\sigma) q_{t}^{\tau},
\end{align*}
\begin{equation*}
 q_{t}^{\tau}=\tr Q_{t}^{\tau}(g),
\end{equation*} 
and $Q^{\tau}_{t}\in (\mathcal{C}^{q}(G)\otimes \End(V_{\tau}))^{K\times K}$ is the kernel associated to the operator
$e^{-t\widetilde{A}_{\tau}}$.
By \cite[equation (5.35)]{Spil}, we have
\begin{equation*}
 \Theta_{\sigma,\lambda}(q_{t}^{\sigma})=\sum_{\tau \in \widehat{K}}m_{\tau}(\sigma)
 e^{t\pi_{\sigma,\lambda}(\Omega)}[\tau\rvert_{M}:\sigma],
\end{equation*}
where 
\begin{equation*}
 \pi_{\sigma,\lambda}(\Omega)=-\lambda^{2}+c(\sigma).
\end{equation*}
This is proved in \cite[p.48]{Ar}.
We study here the case (b).
This means that by (4.5), for $\sigma'\in\widehat{M}$,
\begin{align}
& \Theta_{\sigma',\lambda}(q_{t}^{\sigma})=e^{tc(\sigma)}e^{-t\lambda^{2}},\quad\text{if}\quad \sigma'\in \{\sigma, w \sigma\},\\\notag
& \Theta_{\sigma',\lambda}(q_{t}^{\sigma})=0,\quad\text{if}\quad \sigma'\notin\{\sigma, w \sigma\}.
\end{align}
If we put together (5.14)---(5.17), we obtain
\begin{align}
 &\Theta_{\sigma',\lambda}( k_{t}^{\tau_{s}(\sigma)})=\lambda e^{-t\lambda^{2}} , \quad \text{if}\quad \sigma'=\sigma\\
 &\Theta_{\sigma',\lambda}( k_{t}^{\tau_{s}(\sigma)})=-\lambda e^{-t\lambda^{2}} , \quad \text{if}\quad \sigma'= w \sigma\\
 &\Theta_{\sigma',\lambda}( k_{t}^{\tau_{s}(\sigma)})=0, \quad \text{if} \quad \sigma'\notin\{\sigma, w \sigma\}.
\end{align}
For the identity contribution we use the fact that when $s$ is restricted to $M$ it decomposes as $s^{+}+s^{-}$.
If $ \nu_{\sigma}=(\nu_{1},\ldots,\nu_{n-1},\nu_{n})$ is the highest weight of $\sigma$,
then the highest weight of $w\sigma$ is given by $ \nu_{w\sigma}=(\nu_{1},\ldots,\nu_{n-1},-\nu_{n})$.
Specifically, for the half spin representations $s^{\pm}$ we have
\begin{equation*}
  \nu_{ws^{\pm}}=(\frac{1}{2},\ldots,\mp\frac{1}{2}).
\end{equation*}
Hence,
\begin{equation*}
 ws^{\pm}=s^{\mp}.
\end{equation*}
The Plancherel polynomial is an even polynomial of $\lambda$ and also $P_{s^{+}}(i\lambda)=P_{ws^{+}}(-i\lambda)=P_{s^{-}}(i\lambda)$.
Hence,
\begin{align}
 k_{t}^{\tau_{s}(\sigma)}(e)\notag&=\sum_{\sigma\in\widehat{M}}\int_{\R}\Theta_{\sigma,\lambda}( k_{t}^{\tau_{s}(\sigma)})P_{\sigma}(i\lambda)d\lambda\\
&= \int_{\R}\lambda e^{-t\lambda^{2}}P_{s^{+}}(i\lambda)d\lambda
+\int_{\R}-\lambda e^{-t\lambda^{2}}P_{s^{-}}(i\lambda)d\lambda=0.
\end{align}
For the hyperbolic contribution we use (5.18)---(5.20). 
\begin{equation*}
 \Tr(D^{\sharp}_{\chi}(\sigma)e^{-t(D^{\sharp}_{\chi}(\sigma))^{2}})=
 \frac{1}{2\pi}\sum_{[\gamma]\neq e} \frac{l(\gamma)\tr(\chi(\gamma)\otimes(\sigma(m_{\gamma})-w\sigma(m_{\gamma})))}{D(\gamma) n_{\Gamma}(\gamma)}
 \int_{\R}\lambda e^{-t\lambda^{2}}e^{-il(\gamma)\lambda}d\lambda.
\end{equation*}
Equivalently,
\begin{equation*}
 \Tr(D^{\sharp}_{\chi}(\sigma)e^{-t(D^{\sharp}_{\chi}(\sigma))^{2}})=
  \sum_{[\gamma]\neq e}\frac{-2\pi i}{(4\pi t)^{3/2}}\frac{l^{2}(\gamma)\tr(\chi(\gamma)\otimes(\sigma(m_{\gamma})-w\sigma(m_{\gamma})))}
  {n_{\Gamma}(\gamma)D(\gamma)}e^{-l^{2}(\gamma)/4t}.
\end{equation*}
All in all, we have proved the following theorems.
\begin{thm}
 For every $\sigma \in \widehat{M}$ we have for case (b)
 \begin{equation}\label{f:trace formula 3}
 \Tr(D^{\sharp}_{\chi}(\sigma)e^{-t(D^{\sharp}_{\chi}(\sigma))^{2}})=
  \sum_{[\gamma]\neq e}\frac{-2\pi i}{(4\pi t)^{3/2}}\frac{l^{2}(\gamma)\tr(\chi(\gamma)\otimes(\sigma(m_{\gamma})-w\sigma(m_{\gamma})))}
  {n_{\Gamma}(\gamma)D(\gamma)}e^{-l^{2}(\gamma)/4t}.
 \end{equation}
\end{thm}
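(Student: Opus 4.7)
The plan is to realize $D^{\sharp}_{\chi}(\sigma)e^{-t(D^{\sharp}_{\chi}(\sigma))^{2}}$ as an integral operator on $X$ whose kernel can be analyzed on the universal cover, then apply the non-unitary Selberg-type trace formula of \cite[Proposition 6.1]{M1}, and finally reduce everything to a Fourier-transform computation on the unitary principal series. First, by equation (4.4), the lift of $D^{\sharp}_{\chi}(\sigma)$ to $\widetilde{X}$ is $\widetilde{D}(\sigma)\otimes \Id_{V_{\chi}}$, so the lift of its heat-type operator has kernel $K_{t}^{\tau_{s}(\sigma)}(\widetilde{x},\widetilde{y})\otimes \Id_{V_{\chi}}$, where $K_{t}^{\tau_{s}(\sigma)}$ is the kernel of $\widetilde{D}(\sigma)e^{-t(\widetilde{D}(\sigma))^{2}}$ and lies in the Harish-Chandra Schwartz space. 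Summing over $\Gamma$ with the twist $\chi(\gamma)$, as in the derivation culminating in (5.5), realizes the operator on $X$ as a trace class operator with the usual $\Gamma$-summed kernel.

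Next, I would substitute this into the trace formula for non-unitary twists, obtaining
\begin{equation*}
\Tr(D^{\sharp}_{\chi}(\sigma)e^{-t(D^{\sharp}_{\chi}(\sigma))^{2}})
=\dim(V_{\chi})\Vol(X)k_{t}^{\tau_{s}(\sigma)}(e)
+\frac{1}{2\pi}\sum_{[\gamma]\neq e}\frac{l(\gamma)\tr\chi(\gamma)}{n_{\Gamma}(\gamma)D(\gamma)}\sum_{\sigma'\in\widehat{M}}\overline{\tr\sigma'(m_{\gamma})}\int_{\R}\Theta_{\sigma',\lambda}(k_{t}^{\tau_{s}(\sigma)})e^{-il(\gamma)\lambda}d\lambda.
\end{equation*}
The identity contribution is shown to vanish by the Plancherel calculation (5.21): when $s$ restricts to $s^{+}\oplus s^{-}$, the action of $w$ swaps $s^{\pm}$, so the two integrals cancel because $P_{s^{+}}(i\lambda)=P_{s^{-}}(i\lambda)$ and the integrand is odd in $\lambda$.

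For the hyperbolic terms, the crucial input is the computation of $\Theta_{\sigma',\lambda}(k_{t}^{\tau_{s}(\sigma)})$ on the unitary principal series $\pi_{\sigma',\lambda}$. Applying (5.11) and (5.14) and using the Parthasarathy-type formula (5.15), together with the branching identity $\sigma+w\sigma=i^{*}(\tau^{+}(\sigma)-\tau^{-}(\sigma))$ from Proposition 4.1, gives precisely the values (5.18)--(5.20): the Fourier transform is $\lambda e^{-t\lambda^{2}}$ when $\sigma'=\sigma$, $-\lambda e^{-t\lambda^{2}}$ when $\sigma'=w\sigma$, and zero otherwise. Substituting into the trace formula collapses the $\sigma'$-sum to the difference $\sigma(m_{\gamma})-w\sigma(m_{\gamma})$.

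Finally, evaluating the Gaussian integral $\int_{\R}\lambda e^{-t\lambda^{2}}e^{-il(\gamma)\lambda}d\lambda = -il(\gamma)\sqrt{\pi}/(2t^{3/2})\cdot e^{-l(\gamma)^{2}/4t}$ and assembling constants yields the claimed formula. The main technical obstacle is the Fourier-transform step: one needs to identify $\widetilde{D}_{\tau_{s}(\sigma)}(\pi_{\sigma',\lambda})$ carefully as an endomorphism of the $K$-invariant subspace and show its trace is $\lambda[\sigma-w\sigma:\sigma']$ by combining the half-spin decomposition with Proposition 4.1; once this identification is made, everything else is bookkeeping and a Gaussian integral.
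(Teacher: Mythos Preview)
Your proof is correct and follows essentially the same route as the paper: you lift to $\widetilde{X}$ via (4.4), apply the non-unitary trace formula of \cite[Proposition~6.1]{M1}, compute $\Theta_{\sigma',\lambda}(k_{t}^{\tau_{s}(\sigma)})$ on the principal series using (5.14)--(5.16) to obtain (5.18)--(5.20), observe that the identity term vanishes as in (5.21), and finish with the Gaussian integral. The paper's argument is organized in exactly this order, with the same references and the same identification of the key Fourier-transform step as the only nontrivial ingredient.
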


\begin{thm}
 For every $\sigma \in \widehat{M}$ we have for case (b)
   \begin{align}\label{f:trace formula 2}
  \Tr(e^{-tA_{\chi}^{\sharp}(\sigma)})=\notag&2\dim(V_{\chi})\Vol(X)\int_{\R}e^{-t\lambda^{2}}P_{\sigma}(i\lambda)d\lambda\\
  &+\sum_{[\gamma]\neq e}\frac{l(\gamma)}{n_{\Gamma}(\gamma)}L_{sym}(\gamma;\sigma+w\sigma)
  \frac{e^{-l(\gamma)^{{2}}/4t}}{(4\pi t)^{1/2}},
 \end{align} 
where 
\begin{equation*}
 L_{sym}(\gamma;\sigma)= \frac{\tr(\sigma(m_{\gamma})\otimes\chi(\gamma))e^{-|\rho|l(\gamma)}}{\det(\Id-\Ad(m_{\gamma}a_{\gamma})_{\overline{\mathfrak{n}})}}.
\end{equation*}
\end{thm}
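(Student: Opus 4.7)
The plan is to mimic the derivation of Theorem 5.9 just given, but starting from the kernel of $e^{-tA_{\chi}^{\sharp}(\sigma)}$ instead of $D^{\sharp}_{\chi}(\sigma)e^{-t(D^{\sharp}_{\chi}(\sigma))^{2}}$. By the Parthasarathy-type identity (4.8) and the decomposition (4.6), the operator $A_{\chi}^{\sharp}(\sigma)$ is a direct sum of shifted twisted Bochner--Laplace operators $A^{\sharp}_{\tau,\chi} + c(\sigma)$ over those $\tau\in\widehat K$ with $m_{\tau}(\sigma)\neq 0$. Consequently $e^{-tA_{\chi}^{\sharp}(\sigma)}$ is a smoothing integral operator, and its kernel on $\widetilde X$ is the lift of $e^{-tc(\sigma)}\bigoplus_{\tau} Q_t^{\tau}\otimes \Id_{V_{\chi}}$, where $Q_t^{\tau}\in(\mathcal C^q(G)\otimes \End V_\tau)^{K\times K}$ is the Harish--Chandra--Schwartz kernel of $e^{-t\widetilde A_{\tau}}$. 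The trace-class property and the $\Gamma$-sum representation of the trace are then inherited from \cite[Lemma 2.4, Proposition 2.5]{M1} exactly as in the Dirac case on page 18 of the excerpt.

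Next I would apply the trace formula of \cite[Proposition 6.1]{M1} for non-unitary twists, which after setting $q_t^{\sigma}=\sum_\tau m_\tau(\sigma)q_t^{\tau}$ splits as an identity contribution plus a sum over hyperbolic conjugacy classes $[\gamma]\neq e$. For the identity contribution I would invoke (5.17): $\Theta_{\sigma',\lambda}(q_t^{\sigma})=e^{tc(\sigma)}e^{-t\lambda^2}$ precisely when $\sigma'\in\{\sigma,w\sigma\}$ and vanishes otherwise. After multiplying by $e^{-tc(\sigma)}$ (coming from the shift $c(\sigma)$ in (4.6)), the Plancherel measure $P_{\sigma'}(i\lambda)\,d\lambda$ integrates $e^{-t\lambda^2}$; since $P_{w\sigma}(i\lambda)=P_{\sigma}(i\lambda)$ (the Plancherel polynomial is Weyl-invariant, and this is exactly the identity used for $P_{s^{\pm}}$ in (5.21)), the two contributions coincide and produce the factor $2\dim(V_\chi)\Vol(X)\int_{\R}e^{-t\lambda^2}P_{\sigma}(i\lambda)\,d\lambda$.

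For the hyperbolic part I would use that the Fourier inversion applied to $\Theta_{\sigma',\lambda}(q_t^{\sigma})$ gives terms proportional to $\tr\sigma'(m_\gamma)$ for $\sigma'\in\{\sigma,w\sigma\}$. Summing them yields the combined character $\tr(\sigma(m_\gamma)+w\sigma(m_\gamma))$, which upon multiplication by the Jacobian $D(\gamma)^{-1}e^{-|\rho|l(\gamma)}$ and the twist $\tr\chi(\gamma)$ assembles into $L_{sym}(\gamma;\sigma+w\sigma)$ as defined in (3.6). The remaining Gaussian Fourier integral is
\begin{equation*}
\int_{\R} e^{-t\lambda^{2}}e^{-il(\gamma)\lambda}\,d\lambda=\sqrt{\tfrac{\pi}{t}}\,e^{-l(\gamma)^2/4t}=\frac{2\pi}{(4\pi t)^{1/2}}e^{-l(\gamma)^{2}/4t},
\end{equation*}
which, together with the prefactor $\frac{1}{2\pi}\cdot l(\gamma)/n_\Gamma(\gamma)$ from the Selberg trace formula, produces exactly the summand in (5.24).

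The main obstacle is bookkeeping: tracking how the shift $e^{-tc(\sigma)}$ cancels the $e^{tc(\sigma)}$ arising from $\pi_{\sigma',\lambda}(\Omega)=-\lambda^{2}+c(\sigma)$, and verifying that the two Weyl-conjugate representations $\sigma$ and $w\sigma$ contribute with the \emph{same} sign here (unlike the Dirac case (5.18)--(5.19), where they contributed with opposite signs due to the extra $\widetilde D_{\tau_s(\sigma)}(\pi)$-factor in (5.11)). This sign agreement is precisely what turns $\sigma-w\sigma$ in Theorem 5.9 into $\sigma+w\sigma$ here, and it is the only substantive difference with the case (a) formula of \cite[Theorem 5.7]{Spil}.
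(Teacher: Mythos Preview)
Your proposal is correct and follows exactly the route the paper takes: the paper does not write out a separate argument for Theorem~5.10 but derives it in parallel with Theorem~5.9 from the character computation~(5.17), the Weyl-invariance $P_{w\sigma}=P_{\sigma}$, and the trace formula of \cite[Proposition~6.1]{M1}, the only change being that the absence of the $\widetilde D_{\tau_s(\sigma)}(\pi)$-factor makes $\sigma$ and $w\sigma$ contribute with the same sign, yielding $\sigma+w\sigma$ and the factor~$2$ in the identity term. Your write-up in fact supplies more detail than the paper's terse ``All in all, we have proved the following theorems.''
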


\begin{center}
\section{\textnormal{Meromorphic continuation of the super zeta function}}
\end{center}

Let $N\in\N$. Let $s_{i},i=1,\ldots,N$ be complex numbers such that $s_{i}\in\C-\spec(-{D^{\sharp}_{\chi}(\sigma)}^{2})$.
We consider the resolvent operator
\begin{equation*}
 R(s_{i}^{2})=({D^{\sharp}_{\chi}(\sigma)}^{2}+s_{i}^{2})^{-1}.
\end{equation*}
We want to obtain the trace class property of the operators
\begin{align*}
&\prod_{i=1}^{N}R(s_{i}^{2})\\
D^{\sharp}_{\chi}(\sigma)&\prod_{i=1}^{N}R(s_{i}^{2}).
\end{align*}
In order to obtain this property, we take sufficient large $N\in\N$,
such that 
\begin{itemize}
\item for $N>\frac{d}{2}$,
\begin{equation}
 \Tr(\prod_{i=1}^{N}R(s_{i}^{2}))<\infty.
\end{equation}
\item  for $N>\frac{d}{2}+1$,
\begin{equation}
 \Tr(D^{\sharp}_{\chi}(\sigma)\prod_{i=1}^{N}R(s_{i}^{2}))<\infty.
\end{equation}
\end{itemize}
We denote the space of pseudodifferential operators of order $k$ by $\psi DO^{k}$.
To prove the trace class property of the operators above, we
observe at first that $\prod_{i=1}^{N}R(s_{i}^{2})\in\psi DO^{-2N}$.

Let $\Delta$ be the Bochner-Laplace operator with respect
to some metric, acting on $C^{\infty}(X, E_{\tau_{s}(\sigma)}\otimes E_{\chi})$.
Then, $\Delta$ is a second-order elliptic differential operator,
which is formally self-adjoint and non-negative, i.e.,
$\Delta\geq 0$. Then, by Weyl's law, we have that 
for $N>\frac{d}{2}$, 
\begin{equation*}
 (\Delta+\Id)^{-N}
\end{equation*}
is a trace class operator.
Moreover,
\begin{equation*}
 B:=(\Delta+\Id)^{N}\prod_{i=1}^{N}R(s_{i}^{2})
\end{equation*}
is $\psi DO$ of order zero.
Hence, it defines a bounded operator in $L^{2}(X, E_{\tau_{s}(\sigma)}\otimes E_{\chi})$.
Thus,
\begin{equation*}
 \prod_{i=1}^{N}R(s_{i}^{2})=(\Delta+\Id)^{-N}B
\end{equation*}
is a trace class operator.

We recall here the following expressions of the resolvents. Let $s_{1},\ldots,s_{N}\in\C$ such that 
Re$(s_{i}^{2})>-c$, for all $i=1,\ldots,N$, where $c$ is a real number such that $\spec\big(D_{\chi}^{\sharp}(\sigma)^{2}\big)\subset
\{z\in\C\colon \text{Re}(z)>c\}$.\\
Then,
\begin{align}
D^{\sharp}_{\chi}(\sigma){{(D^{\sharp}_{\chi}(\sigma)}^{2}+s_{i}^{2})}^{-1}&=\int_{0}^{\infty}e^{-ts_{i}^{2}}D^{\sharp}_{\chi}(\sigma)e^{-t{D^{\sharp}_{\chi}(\sigma)}^{2}}dt\\
(A_{\chi}^{\sharp}(\sigma)+s_{i}^{2})^{-1}&=\int_{0}^{\infty}e^{-ts_{i}^{2}}e^{-t{A_{\chi}^{\sharp}(\sigma)}}dt.
\end{align}
\begin{prop}
Let  $N\in\N$ with $N>d/2+1$.
Let $s_{1},\ldots,s_{N}\in\C$ with 
 $s_{i}\neq s_{j}$ for all $i\neq j$ such that 
\emph{Re}$(s_{i}^{2})>-c$, for all $i=1,\ldots,N$, where $c$ is a real number such that $\spec\big(D_{\chi}^{\sharp}(\sigma)^{2}\big)\subset
\{z\in\C\colon \emph{Re}(z)>c\}$.
Let $L^{s}(s):=\frac{d}{ds}\log(Z^{s}(s;\sigma,\chi))$ be the logarithmic derivative of the super zeta function. Then,
\begin{equation}
\label{eq super mer}
 \Tr(D^{\sharp}_{\chi}(\sigma)\prod_{i=1}^{N}({D^{\sharp}_{\chi}(\sigma)}^{2}+s_{i}^{2})^{-1})=-\frac{i}{2}\sum_{i=1}^{N}\bigg(\prod_{\substack{j=1\\ j\neq i}}^{N}\frac{1}{s_{j}^{2}-s_{i}^{2}}\bigg)L^{s}(s_{i}).
\end{equation}
\end{prop}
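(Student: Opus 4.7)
The plan is to reduce the left-hand side to a sum of single resolvent traces via partial fractions, represent each single resolvent trace as a Laplace integral of the Dirac heat trace computed in Theorem 5.9, perform the Gaussian integral term-by-term, and then recognize the result as the logarithmic derivative $L^{s}(s_{i})$ computed in Lemma 3.6.

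\textbf{Step 1 (partial fractions).} Since the $s_{i}^{2}$ are pairwise distinct, the scalar identity
\begin{equation*}
\prod_{i=1}^{N}\frac{1}{x+s_{i}^{2}}=\sum_{i=1}^{N}\Bigl(\prod_{\substack{j=1\\ j\neq i}}^{N}\frac{1}{s_{j}^{2}-s_{i}^{2}}\Bigr)\frac{1}{x+s_{i}^{2}}
\end{equation*}
holds. Applying this via the functional calculus for $D^{\sharp}_{\chi}(\sigma)^{2}$ (all resolvents $R(s_{i}^{2})$ commute since they are functions of the same operator), and left-multiplying by $D^{\sharp}_{\chi}(\sigma)$, I obtain a decomposition of $D^{\sharp}_{\chi}(\sigma)\prod_{i}R(s_{i}^{2})$ into $N$ summands, each of the form $c_{i}\cdot D^{\sharp}_{\chi}(\sigma)R(s_{i}^{2})$ with the coefficients stated in the proposition. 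By the trace class bound (6.2) and linearity of the trace, it is enough to evaluate $\Tr(D^{\sharp}_{\chi}(\sigma)R(s_{i}^{2}))$ for a fixed $i$.

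\textbf{Step 2 (Laplace representation and heat trace).} Using identity (6.3), for Re$(s_{i}^{2})>-c$ I write
\begin{equation*}
D^{\sharp}_{\chi}(\sigma)R(s_{i}^{2})=\int_{0}^{\infty}e^{-ts_{i}^{2}}\,D^{\sharp}_{\chi}(\sigma)e^{-tD^{\sharp}_{\chi}(\sigma)^{2}}\,dt.
\end{equation*}
The Gaussian bound $e^{-l^{2}(\gamma)/4t}$ in Theorem 5.9 combined with the exponential growth bound on the number of $[\gamma]$ with $l(\gamma)\leq T$ ensures the trace converges and is uniformly integrable against $e^{-ts_{i}^{2}}$ on $[0,\infty)$; this justifies interchanging $\Tr$, the $t$-integral and the sum over $[\gamma]$. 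Inserting the formula from Theorem 5.9 and performing the classical Laplace transform
\begin{equation*}
\int_{0}^{\infty}\frac{e^{-l^{2}/4t}\,e^{-s^{2}t}}{(4\pi t)^{3/2}}\,dt=\frac{e^{-sl}}{4\pi l},\qquad \text{Re}(s)>0,\ l>0,
\end{equation*}
(extended by analytic continuation in $s$) to each $\gamma$-term, I obtain
\begin{equation*}
\Tr\bigl(D^{\sharp}_{\chi}(\sigma)R(s_{i}^{2})\bigr)=-\frac{i}{2}\sum_{[\gamma]\neq e}\frac{l(\gamma)\,\tr(\chi(\gamma)\otimes(\sigma(m_{\gamma})-w\sigma(m_{\gamma})))}{n_{\Gamma}(\gamma)\,D(\gamma)}\,e^{-s_{i}l(\gamma)}.
\end{equation*}

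\textbf{Step 3 (identification with $L^{s}$).} Using $D(\gamma)=e^{|\rho|l(\gamma)}\det(\Id-\Ad(m_{\gamma}a_{\gamma})_{\overline{\mathfrak{n}}})$, the $\gamma$-sum above is exactly $-\tfrac{i}{2}\sum_{[\gamma]\neq e}\tfrac{l(\gamma)}{n_{\Gamma}(\gamma)}L_{\mathrm{sym}}(\gamma;\sigma-w\sigma)e^{-s_{i}l(\gamma)}$, which by equation \eqref{f:log der super} of Lemma 3.6 equals $-\tfrac{i}{2}L^{s}(s_{i})$. Substituting into the partial fraction decomposition of Step 1 yields the claimed identity.

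\textbf{Main obstacle.} The nontrivial analytic point is the exchange of trace, $t$-integral, and sum over hyperbolic conjugacy classes; this requires controlling the Dirac heat kernel as $t\to 0^{+}$ (where the $\gamma=e$ contribution vanishes by Theorem 5.9, removing the worst singularity) and showing that the resulting series is absolutely convergent uniformly for $t$ in compact subsets of $(0,\infty)$ together with an integrable majorant at both endpoints. Everything else is an algebraic manipulation of the trace formula combined with the explicit Gaussian Laplace transform.
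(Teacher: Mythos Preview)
Your Steps 2--3 (Laplace transform of the Dirac heat trace and identification with $L^{s}$ via Lemma 3.6) are exactly what the paper does. The gap is in Step 1. You split the trace into the individual terms $\Tr\bigl(D^{\sharp}_{\chi}(\sigma)R(s_{i}^{2})\bigr)$ ``by linearity'', but each $D^{\sharp}_{\chi}(\sigma)R(s_{i}^{2})$ is a pseudodifferential operator of order $-1$ on a $d$-dimensional closed manifold and is therefore \emph{not} trace class for $d\geq 2$. The bound (6.2) concerns the full product and holds precisely because $N>d/2+1$; it does not descend to the summands of the partial-fraction decomposition. So the quantity you claim to evaluate in Step 2 is undefined, and the interchange of $\Tr$ with $\int_{0}^{\infty}$ you invoke there has no meaning at the level of a single $i$.

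The paper circumvents this by keeping the partial-fraction sum \emph{inside} the Laplace integral: one writes the trace-class operator $D^{\sharp}_{\chi}(\sigma)\prod_{i}R(s_{i}^{2})$ as $\int_{0}^{\infty}\bigl(\sum_{i}c_{i}e^{-ts_{i}^{2}}\bigr)D^{\sharp}_{\chi}(\sigma)e^{-tD^{\sharp}_{\chi}(\sigma)^{2}}\,dt$, takes the trace of the product, and then exchanges $\Tr$ with the $t$-integral. The small-$t$ issue is handled by the cancellation $\sum_{i}c_{i}e^{-ts_{i}^{2}}=O(t^{N-1})$ together with the a~priori asymptotic $\Tr(D^{\sharp}_{\chi}(\sigma)e^{-tD^{\sharp}_{\chi}(\sigma)^{2}})=O(t^{-d/2})$; this is where the hypothesis $N>d/2+1$ is actually used. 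Only \emph{after} this interchange does one split the $t$-integral into the $N$ pieces and apply your Laplace transform term by term. Your observation that the identity contribution in Theorem 5.9 vanishes is correct and could serve as an alternative justification for integrability at $t=0^{+}$, but it must be applied to the trace of the product, not to the nonexistent traces of the individual $D^{\sharp}_{\chi}(\sigma)R(s_{i}^{2})$.
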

\begin{proof}
By \cite[Lemma 6.1]{Spil} and formula (6.3), we have
\begin{equation*}
 D^{\sharp}_{\chi}(\sigma)\prod_{i=1}^{N}({D^{\sharp}_{\chi}(\sigma)}^{2}+s_{i}^{2})^{-1}=\int_{0}^{\infty}\sum_{i=1}^{N}\bigg(\prod_{\substack{j=1\\ j\neq i}}^{N}\frac{1}{s_{j}^{2}-s_{i}^{2}}\bigg)e^{-ts_{i}^{2}}
   D^{\sharp}_{\chi}(\sigma)e^{-t{D^{\sharp}_{\chi}(\sigma)}^{2}}dt. 
\end{equation*}
The operators $ D^{\sharp}_{\chi}(\sigma)\prod_{i=1}^{N}({D^{\sharp}_{\chi}(\sigma)}^{2}+s_{i}^{2})^{-1}$, and
 $D^{\sharp}_{\chi}(\sigma)e^{-t{D^{\sharp}_{\chi}}}$ are both of trace class.\\
Then,
\begin{align*}
 D^{\sharp}_{\chi}(\sigma)\prod_{i=1}^{N}({D^{\sharp}_{\chi}(\sigma)}^{2}+s_{i}^{2})^{-1}&=\int_{\epsilon}^{\infty}\sum_{i=1}^{N}\bigg(\prod_{\substack{j=1\\ j\neq i}}^{N}\frac{1}{s_{j}^{2}-s_{i}^{2}}\bigg)e^{-ts_{i}^{2}}
   D^{\sharp}_{\chi}(\sigma)e^{-t{D^{\sharp}_{\chi}(\sigma)}^{2}}dt\\
&\xrightarrow[\epsilon \rightarrow 0]{R\rightarrow \infty} \int_{\epsilon}^{R}
\sum_{i=1}^{N}\bigg(\prod_{\substack{j=1\\ j\neq i}}^{N}\frac{1}{s_{j}^{2}-s_{i}^{2}}\bigg)e^{-ts_{i}^{2}}
  D^{\sharp}_{\chi}(\sigma)e^{-t{D^{\sharp}_{\chi}(\sigma)}^{2}}dt,\\
\end{align*}
where the limit is taken with respect to the trace norm 
$\lVert \mathcal{A}\rVert_{1}:=\Tr\lvert \mathcal{A}\rvert$, with
$\mathcal{A}= D^{\sharp}_{\chi}(\sigma)\prod_{i=1}^{N}({D^{\sharp}_{\chi}(\sigma)}^{2}+s_{i}^{2})^{-1}$, or $D^{\sharp}_{\chi}(\sigma)e^{-t{D^{\sharp}_{\chi}(\sigma)}^{2}}$.
We have
\begin{align*}
\Tr\Bigg(\int_{0}^{\infty}\sum_{i=1}^{N}\bigg(\prod_{\substack{j=1\\ j\neq i}}^{N}\frac{1}{s_{j}^{2}-s_{i}^{2}}\bigg)&e^{-ts_{i}^{2}}
   D^{\sharp}_{\chi}(\sigma)e^{-t{D^{\sharp}_{\chi}(\sigma)}^{2}}dt\Bigg)=\\
&\Tr\Bigg(\lim_{\substack{\epsilon \rightarrow 0\\ R\rightarrow \infty}}\int_{\epsilon}^{R}\sum_{i=1}^{N}\bigg(\prod_{\substack{j=1\\ j\neq i}}^{N}\frac{1}{s_{j}^{2}-s_{i}^{2}}\bigg)e^{-ts_{i}^{2}}
   D^{\sharp}_{\chi}(\sigma)e^{-t{D^{\sharp}_{\chi}(\sigma)}^{2}}dt\Bigg).
\end{align*}
But,
\begin{align*}
 \Tr\Bigg(\int_{\epsilon}^{R}\sum_{i=1}^{N}\bigg(\prod_{\substack{j=1\\ j\neq i}}^{N}\frac{1}{s_{j}^{2}-s_{i}^{2}}\bigg)&e^{-ts_{i}^{2}}
   D^{\sharp}_{\chi}(\sigma)e^{-t{D^{\sharp}_{\chi}(\sigma)}^{2}}dt\Bigg)=\\
&\int_{\epsilon}^{R}\sum_{i=1}^{N}\bigg(\prod_{\substack{j=1\\ j\neq i}}^{N}\frac{1}{s_{j}^{2}-s_{i}^{2}}\bigg)e^{-ts_{i}^{2}}
   \Tr(D^{\sharp}_{\chi}(\sigma)e^{-t{D^{\sharp}_{\chi}(\sigma)}^{2}})dt.
\end{align*}
Hence, it is sufficient to show that the limit
\begin{equation*}
 \lim_{\substack{\epsilon \rightarrow 0\\ R\rightarrow \infty}}\int_{\epsilon}^{R}\sum_{i=1}^{N}\bigg(\prod_{\substack{j=1\\ j\neq i}}^{N}\frac{1}{s_{j}^{2}-s_{i}^{2}}\bigg)e^{-ts_{i}^{2}}
 \Tr(D^{\sharp}_{\chi}(\sigma)e^{-t{D^{\sharp}_{\chi}(\sigma)}^{2}})dt
\end{equation*}
exists. We study the behavior of the integral in the equation above as $\epsilon\rightarrow 0$.\\
By \cite[Lemma 6.3]{Spil}, we have that as $t\rightarrow 0^{+}$
\begin{equation*}
 \sum_{i=1}^{N}\bigg(\prod_{\substack{j=1\\ j\neq i}}^{N}\frac{1}{s_{j}^{2}-s_{i}^{2}}\bigg) e^{-ts_{i}^{2}}=O(t^{N-1}).
\end{equation*}
Also, by \cite[Theorem 2.7, p.503-504]{GS}, there exists a short time asymptotic expansion of the kernel of the operator $D^{\sharp}_{\chi}(\sigma)e^{-t{D^{\sharp}_{\chi}(\sigma)}^{2}}$
\begin{equation*}
 \Tr(D^{\sharp}_{\chi}(\sigma)e^{-t{D^{\sharp}_{\chi}(\sigma)}^{2}})\sim_{t\rightarrow 0^{+}}t^{-d/2}.
\end{equation*}
We have that as $t\rightarrow 0^{+}$
\begin{equation*}
\Bigg\lvert\sum_{i=1}^{N}\bigg(\prod_{\substack{j=1\\ j\neq i}}^{N}\frac{1}{s_{j}^{2}-s_{i}^{2}}\bigg)e^{-ts_{i}^{2}}
 \Tr(D^{\sharp}_{\chi}(\sigma)e^{-t{D^{\sharp}_{\chi}(\sigma)}^{2}}) \Bigg\lvert\leq Ct,
\end{equation*}
where $C$ is a positive constant.
All in all, we have proved
 \begin{equation*}
   \Tr(D^{\sharp}_{\chi}(\sigma)\prod_{i=1}^{N}({D^{\sharp}_{\chi}(\sigma)}^{2}+s_{i}^{2})^{-1})=\int_{0}^{\infty}\sum_{i=1}^{N}\bigg(\prod_{\substack{j=1\\ j\neq i}}^{N}\frac{1}{s_{j}^{2}-s_{i}^{2}}\bigg)e^{-ts_{i}^{2}}
   \Tr(D^{\sharp}_{\chi}(\sigma)e^{-t{D^{\sharp}_{\chi}(\sigma)}^{2}})dt.
 \end{equation*}
We apply now the trace formula (5.22) for the operator $D^{\sharp}_{\chi}(\sigma)e^{-t{D^{\sharp}_{\chi}(\sigma)}^{2}}$. Then, we get
\begin{align}
 \Tr(D^{\sharp}_{\chi}(\sigma)\prod_{i=1}^{N}({D^{\sharp}_{\chi}(\sigma)}^{2}+s_{i}^{2})^{-1})\notag&=\int_{0}^{\infty}\sum_{i=1}^{N}\bigg(\prod_{\substack{j=1\\ j\neq i}}^{N}\frac{1}{s_{j}^{2}-s_{i}^{2}}\bigg)e^{-ts_{i}^{2}}\\
  & \bigg\{\sum_{[\gamma]\neq e}\frac{-2\pi i}{(4\pi t)^{3/2}}\frac{l^{2}(\gamma)\tr(\chi(\gamma)\otimes(\sigma(m_{\gamma})-w\sigma(m_{\gamma})))}
  {n_{\Gamma}(\gamma)D(\gamma)}e^{-l^{2}(\gamma)/4t}\bigg\}.
\end{align}
If we use the formula (see \cite[p.146, (28)]{Er})
\begin{equation*}
 \int_{0}^{\infty}e^{-ts^{2}}\frac{1}{(4\pi t)^{3/2}}e^{-l^{2}(\gamma)/4t}dt=\frac{e^{-l(\gamma)s}}{4\pi l(\gamma)}
\end{equation*}
equation (6.6) becomes
\begin{align*}
\Tr(D^{\sharp}_{\chi}(\sigma)\prod_{i=1}^{N}({D^{\sharp}_{\chi}(\sigma)}^{2}+s_{i}^{2})^{-1})=&\frac{-i}{2}\sum_{i=1}^{N}\bigg(\prod_{\substack{j=1\\ j\neq i}}^{N}\frac{1}{s_{j}^{2}-s_{i}^{2}}\bigg)\\
&\bigg\{\sum_{[\gamma]\neq e}\frac{l(\gamma)\tr(\chi(\gamma)\otimes(\sigma(m_{\gamma})-w\sigma(m_{\gamma})))}
{n_{\Gamma}(\gamma)D(\gamma)}e^{-l(\gamma)s_{i}}\bigg\}.
\end{align*}
Hence, by equation \eqref{f:log der super} we get
\begin{equation*}
 \Tr(D^{\sharp}_{\chi}(\sigma)\prod_{i=1}^{N}({D^{\sharp}_{\chi}(\sigma)}^{2}+s_{i}^{2})^{-1})=\frac{-i}{2}\sum_{i=1}^{N}\bigg(\prod_{\substack{j=1\\ j\neq i}}^{N}\frac{1}{s_{j}^{2}-s_{i}^{2}}\bigg)L^{s}(s_{i}).
\end{equation*}
\end{proof}
The meromorphic continuation of the super zeta function follows from the Proposition (6.1) above.
\begin{thm}
The super zeta function $Z^{s}(s;\sigma,\chi)$ admits a meromorphic continuation to the whole complex plane $\C$. The singularities 
are located at $\{s_{k}^{\pm}=\pm i\lambda_{k}:\lambda_{k}\in \spec(D^{\sharp}_{\chi}(\sigma)), k\in\N\}$
of order  $\pm m_{s}(\lambda_{k})$, where $m_{s}(\lambda_{k})=m(\lambda_{k})-m(-\lambda_{k})\in\N$ and 
$m(\pm\lambda_{k})$ denotes the algebraic multiplicity of the eigenvalue $\pm\lambda_{k}$.
\end{thm}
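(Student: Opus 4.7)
The main engine is Proposition 6.1, which ties $L^{s}(s)$ (defined originally only for $\mathrm{Re}(s)>c$) to the trace of a product of resolvents of $(D^{\sharp}_{\chi}(\sigma))^{2}$. The plan is to use this identity to transport the manifest meromorphic behavior of the resolvent to $L^{s}$, and then to recover $Z^{s}$ from its logarithmic derivative.

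Fix $N\in\mathbb{N}$ with $N>d/2+1$ and choose auxiliary parameters $s_{2},\dots,s_{N}$ pairwise distinct with $\mathrm{Re}(s_{j}^{2})>-c$ and $s_{j}^{2}\notin-\spec((D^{\sharp}_{\chi}(\sigma))^{2})$ for all $j$. Setting $s_{1}=s$ and solving the identity (6.5) algebraically for $L^{s}(s)$ gives
\[
L^{s}(s)=-2i\,\Bigl(\prod_{j=2}^{N}(s_{j}^{2}-s^{2})\Bigr)\,F(s)+H(s),
\]
where $F(s):=\Tr\bigl(D^{\sharp}_{\chi}(\sigma)(D^{\sharp}_{\chi}(\sigma)^{2}+s^{2})^{-1}\prod_{j=2}^{N}(D^{\sharp}_{\chi}(\sigma)^{2}+s_{j}^{2})^{-1}\bigr)$ and $H(s)$ is a rational function of $s$ built from the constants $s_{j}$ and $L^{s}(s_{j})$. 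The operator $\prod_{j\geq 2}(D^{\sharp}_{\chi}(\sigma)^{2}+s_{j}^{2})^{-1}$ is a fixed trace-class operator, while $(D^{\sharp}_{\chi}(\sigma)^{2}+s^{2})^{-1}$ is a meromorphic family of bounded operators in $s$ whose poles are precisely at $s^{2}\in-\spec((D^{\sharp}_{\chi}(\sigma))^{2})$. By Lemma 5.8 and the Parthasarathy formula (4.8), $\spec((D^{\sharp}_{\chi}(\sigma))^{2})=\{\lambda_{k}^{2}:\lambda_{k}\in\spec(D^{\sharp}_{\chi}(\sigma))\}$, so $F(s)$, and hence $L^{s}(s)$, extends to a meromorphic function on $\mathbb{C}$ with singular set contained in $\{\pm i\lambda_{k}\}$.

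To compute the orders of the singularities I would use the partial-fraction identity
\[
D^{\sharp}_{\chi}(\sigma)(D^{\sharp}_{\chi}(\sigma)^{2}+s^{2})^{-1}=\tfrac{1}{2}\bigl((D^{\sharp}_{\chi}(\sigma)-is)^{-1}+(D^{\sharp}_{\chi}(\sigma)+is)^{-1}\bigr).
\]
Restricted to the generalized eigenspace $V_{\lambda}$ of $D^{\sharp}_{\chi}(\sigma)$ for an eigenvalue $\lambda$, the resolvent $(D^{\sharp}_{\chi}(\sigma)-z)^{-1}$ admits the expansion $\sum_{n\geq 0}N_{\lambda}^{n}/(\lambda-z)^{n+1}$, where $N_{\lambda}$ is the nilpotent part. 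Since $\Tr N_{\lambda}^{n}=0$ for $n\geq 1$ and $\Tr I_{V_{\lambda}}=m(\lambda)$, only the $n=0$ term contributes to the trace, producing a simple pole of residue $-m(\lambda)$ at $z=\lambda$. Combining the contributions of the eigenvalues $\lambda_{k}$ and $-\lambda_{k}$ (both of which feed into the pole at $s=i\lambda_{k}$), a direct calculation yields
\[
\underset{s=i\lambda_{k}}{\mathrm{Res}}\,\Tr\bigl(D^{\sharp}_{\chi}(\sigma)(D^{\sharp}_{\chi}(\sigma)^{2}+s^{2})^{-1}\bigr)=\frac{m(\lambda_{k})-m(-\lambda_{k})}{2i}=\frac{m_{s}(\lambda_{k})}{2i},
\]
and the opposite sign at $s=-i\lambda_{k}$. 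Since the factor $\prod_{j\geq 2}(s_{j}^{2}-s^{2})\prod_{j\geq 2}(D^{\sharp}_{\chi}(\sigma)^{2}+s_{j}^{2})^{-1}$ is holomorphic at $s=\pm i\lambda_{k}$ and its trace-class scalar coefficient specializes (after unwinding the factors of $-2i$) to $1$, the residues of $L^{s}(s)$ at $\pm i\lambda_{k}$ come out to $\pm m_{s}(\lambda_{k})$, exactly the orders claimed.

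Finally, since $L^{s}$ is meromorphic on $\mathbb{C}$ with only simple poles whose residues are integers $\pm m_{s}(\lambda_{k})$, the exponential of any local antiderivative, $\exp\bigl(\int L^{s}(z)\,dz\bigr)$, is single-valued (each loop around $\pm i\lambda_{k}$ contributes $e^{2\pi i(\pm m_{s}(\lambda_{k}))}=1$), and glues with the original infinite-product definition on $\{\mathrm{Re}(s)>c\}$ to give a meromorphic continuation of $Z^{s}(s;\sigma,\chi)$ to $\mathbb{C}$ with the stated zeros and poles. The principal technical obstacle is ensuring that the trace-class identity in Proposition 6.1 persists under analytic continuation of $s$ through the pole set of the resolvent: the fixed trace-class product $\prod_{j\geq 2}(D^{\sharp}_{\chi}(\sigma)^{2}+s_{j}^{2})^{-1}$ absorbs the $s$-dependent factor, and the presence of Jordan blocks of $D^{\sharp}_{\chi}(\sigma)$ (possible because $\chi$ is not assumed unitary) does not spoil the simple-pole structure since traces of nilpotent operators vanish.
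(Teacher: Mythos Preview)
Your proposal is correct and follows essentially the same route as the paper. Both arguments fix $s_{2},\dots,s_{N}$, isolate $L^{s}(s)$ from the identity of Proposition~6.1, compute the residues of $L^{s}$ at $\pm i\lambda_{k}$ from the spectral side, and then exponentiate using that the residues are integers. The only cosmetic difference is that the paper invokes Lidskii's theorem to write the left-hand side of \eqref{eq super mer} directly as $\sum_{\lambda_{k}} m_{s}(\lambda_{k})\,\lambda_{k}\prod_{i}(\lambda_{k}^{2}+s_{i}^{2})^{-1}$ and reads off the residues from this sum, whereas you unpack the same computation via the partial-fraction identity for $D(D^{2}+s^{2})^{-1}$ and the vanishing of traces of nilpotents on generalized eigenspaces---which is exactly the content of Lidskii in this situation.
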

\begin{proof}
We define the function $\Phi(s_{1},s_{2},\ldots,s_{N})$ of the complex variables $s_{1},s_{2},\ldots,s_{N}$ by
\begin{equation}
\Phi(s_{1},s_{2},\ldots,s_{N})=-\frac{i}{2}\sum_{i=1}^{N}\bigg(\prod_{\substack{j=1\\ j\neq i}}^{N}\frac{1}{s_{j}^{2}-s_{i}^{2}}\bigg)L^{s}(s_{i}).
\end{equation}
By Lidskii's theorem and \cite[Lemma 6.1]{Spil}, (6.5) becomes
 \begin{equation}
\sum_{\lambda_{k}}\sum_{i=1}^{N}\bigg(\prod_{\substack{j=1\\ j\neq i}}^{N}\frac{1}{s_{j}^{2}-s_{i}^{2}}\bigg) m_{s}(\lambda_{k})\lambda_{k}\frac{1}{(\lambda_{k})^{2}+s_{i}^{2}}=\Phi(s_{1},s_{2},\ldots,s_{N}).
 \end{equation}
We fix the complex numbers $s_{i}, i=2,\ldots,N$ with $s_{i}\neq s_{j}$ for $i,j=2,\ldots, N$
and let the complex number $s=s_{1}$ vary.
Hence,
\begin{equation*}
\Phi(s,s_{2},\ldots,s_{N})=\Phi(s)
\end{equation*}
The term that contains the logarithmic derivative $L^{s}(s)$ in $\Phi(s)$ is of the form
\begin{equation}
 \frac{-i}{2}\bigg(\prod_{j=2}^{N}\frac{1}{s_{j}^{2}-s^{2}}\bigg)L^{s}(s).
\end{equation}
The term of 
\begin{equation*}
\sum_{\lambda_{k}}\sum_{i=1}^{N}\bigg(\prod_{\substack{j=1\\ j\neq i}}^{N}\frac{1}{s_{j}^{2}-s_{i}^{2}}\bigg) m_{s}(\lambda_{k})\lambda_{k}\frac{1}{(\lambda_{k})^{2}+s_{i}^{2}}
\end{equation*}
which is singular at $s=\pm i\lambda_{k}$, $k\in \N$ is 
\begin{equation*}
\bigg(\prod_{j=2}^{N}\frac{1}{s_{j}^{2}-s^{2}}\bigg) m_{s}(\lambda_{k})\lambda_{k}\frac{1}{(\lambda_{k})^{2}+s^{2}}.
\end{equation*}
If we multiply both sides of  (6.8) by
\begin{equation*}
 2i\prod_{j=2}^{N}(s_{j}^{2}-s^{2}),
\end{equation*}
we see that the residue of the logarithmic derivative $L^{s}(s)$ at $\pm i\lambda_{k}$ is $\pm m_{s}(\lambda_{k})$.

By (3.8), $L^{s}(s)$ decreases exponentially as Re$(s)\rightarrow \infty$. Hence, the integral
\begin{equation*}
 \int_{s}^{\infty}L^{s}(w)dw
\end{equation*}
over a path connecting $s$ and infinity
is well defined and 
\begin{equation}
 \log Z^{s}(s;\sigma,\chi)=-\int_{s}^{\infty}L^{s}(w)dw.
\end{equation}
The integral above depends on the choice of the path, because $L^{s}(s)$
has singularities at $s_{k}^{\pm}$.
Nevertheless, since all the
residues of the singularities are integers, it follows that the exponential of the integral in 
the right hand side of (6.10)
is independent of the choice of the path.
The meromorphic continuation of the super zeta function
$Z^{s}(s;\sigma,\chi)$ to the whole complex plane follows.
 

\end{proof}

\begin{center}
{\section{\textnormal{Meromorphic continuation of the symmetrized zeta function}}}
\end{center}

Let  $N\in\N$ with $N>d/2$.
We choose  $s_{1},\ldots,s_{N}\in\C$ with 
 $s_{i}\neq s_{j}$ for all $i\neq j$ such that 
 Re$(s_{i}^{2})>-r$, for all $i=1,\ldots,N$, where $r$ is a real number such that $\spec\big(A_{\chi}^{\sharp}(\sigma)\big)\subset
\{z\in\C\colon \text{Re}(z)>r\}$.\\
Then, by \cite[Lemma 6.1]{Spil} and equation (6.4), we have
\begin{equation*}
  \prod_{i=1}^{N}(A_{\chi}^{\sharp}(\sigma)+s_{i}^{2})^{-1}=\int_{0}^{\infty}\sum_{i=1}^{N}\bigg(\prod_{\substack{j=1\\ j\neq i}}^{N}\frac{1}{s_{j}^{2}-s_{i}^{2}}\bigg)e^{-ts_{i}^{2}}
e^{-t{A_{\chi}^{\sharp}(\sigma)}}dt.
\end{equation*}
As in the proof of Proposition 6.1, we can consider the trace of the operators in the formula above and get
\begin{equation*}
  \Tr \prod_{i=1}^{N}(A_{\chi}^{\sharp}(\sigma)+s_{i}^{2})^{-1}=\int_{0}^{\infty}\sum_{i=1}^{N}\bigg(\prod_{\substack{j=1\\ j\neq i}}^{N}\frac{1}{s_{j}^{2}-s_{i}^{2}}\bigg)e^{-ts_{i}^{2}}\Tr 
e^{-t{A_{\chi}^{\sharp}(\sigma)}}dt.
\end{equation*}
We insert the trace formula (5.23) for the operator $e^{-tA_{\chi}^{\sharp}(\sigma)}$ and get
\begin{align}
\notag\Tr \prod_{i=1}^{N}(A_{\tau,\chi}^{\sharp}&(\sigma)+s_{i}^{2})^{-1}=\\\notag&\int_{0}^{\infty}\sum_{i=1}^{N}\bigg(\prod_{\substack{j=1\\ j\neq i}}^{N}\frac{1}{s_{j}^{2}-s_{i}^{2}}\bigg)e^{-ts_{i}^{2}}
\bigg\{2\dim(V_{\chi})\Vol(X)\int_{\R}e^{-t\lambda^{2}}P_{\sigma}(i\lambda)d\lambda\\
&+\sum_{[\gamma]\neq e}\frac{l(\gamma)}{n_{\Gamma}(\gamma)}L_{sym}(\gamma;\sigma+w\sigma)
\frac{e^{-l(\gamma)^{{2}}/4t}}{(4\pi t)^{1/2}}\bigg\}dt.
\end{align}
The first sum in the right hand side of (7.1) includes the double integral
\begin{equation*}
 I=\int_{0}^{\infty}\int_{\R}\sum_{i=1}^{N}\bigg(\prod_{\substack{j=1\\ j\neq i}}^{N}\frac{1}{s_{j}^{2}-s_{i}^{2}}\bigg)
e^{-ts_{i}^{2}}e^{-t\lambda^{2}}P_{\sigma}(i\lambda)d\lambda dt,
\end{equation*}
which has been computed in \cite[p. 33-34]{Spil}
\begin{align*}
I=\sum_{i=1}^{N}\bigg(\prod_{\substack{j=1\\ j\neq i}}^{N}\frac{1}{s_{j}^{2}-s_{i}^{2}}\bigg)
\frac{\pi}{s_{i}}P_{\sigma}(s_{i}).
\end{align*}
Hence, equation (7.1) reads
\begin{align*}
 \Tr \prod_{i=1}^{N}(A_{\chi}^{\sharp}(\sigma)+s_{i}^{2})^{-1}&=\sum_{i=1}^{N}\bigg(\prod_{\substack{j=1\\ j\neq i}}^{N}\frac{1}{s_{j}^{2}-s_{i}^{2}}\bigg)\frac{\pi}{s_{i}} 2\dim(V_{\chi})\Vol(X)P_{\sigma}(s_{i})\\\notag
&+\sum_{i=1}^{N}\frac{1}{2s_{i}}\bigg(\prod_{\substack{j=1\\ j\neq i}}^{N}\frac{1}{s_{j}^{2}-s_{i}^{2}}\bigg)\sum_{[\gamma]\neq[e]} \frac{l(\gamma)}{n_{\Gamma}(\gamma)}L_{sym}(\gamma;\sigma+w\sigma)
e^{-s_{i}l(\gamma)}.
\end{align*}
By \eqref{f:log der symmetrized}, we can insert the logarithmic derivative $L_{S}(s)$ of the symmetrized zeta function.
Then, we get
\begin{align}
\label{eq mer sym}
 \Tr \prod_{i=1}^{N}(A_{\chi}^{\sharp}(\sigma)+s_{i}^{2})^{-1}\notag&=\sum_{i=1}^{N}\bigg(\prod_{\substack{j=1\\ j\neq i}}^{N}\frac{1}{s_{j}^{2}-s_{i}^{2}}\bigg)\frac{\pi}{s_{i}} 2\dim(V_{\chi})\Vol(X)P_{\sigma}(s_{i})\\
&+\sum_{i=1}^{N}\bigg(\prod_{\substack{j=1\\ j\neq i}}^{N}\frac{1}{s_{j}^{2}-s_{i}^{2}}\bigg)\frac{1}{2s_{i}}L_{S}(s_{i}).
\end{align}
Equation (7.2) will give the meromorphic continuation of the symmetrized zeta function.
\begin{thm}
The symmetrized zeta function $S(s;\sigma,\chi)$ admits a meromorphic continuation to the whole complex plane $\C$. The set of the singularities
equals $\{s_{k}^{\pm}=\pm i{\sqrt{\mu_{k}}}:\mu_{k}\in \spec(A^{\sharp}_{\chi}(\sigma)), k\in\N\}$.
The orders of the singularities are equal to $m(\mu_{k})$, where $m(\mu_{k})\in\N$ denotes the algebraic multiplicity of the eigenvalue $\mu_{k}$.
For $\mu_{0}=0$, the order of the singularity $s_{0}$ is equal to $2m(0)$.
\end{thm}
\begin{proof}
By \cite[Lemma 6.1]{Spil} and equation (7.2), we get
\begin{align}
\sum_{\mu_{k}}\sum_{i=1}^{N}\bigg(\prod_{\substack{j=1\\ j\neq i}}^{N}\frac{1}{s_{j}^{2}-s_{i}^{2}}\bigg)\frac{m(\mu_{k})}{\mu_{k}+s_{i}^{2}}=\notag&\sum_{i=1}^{N}\bigg(\prod_{\substack{j=1\\ j\neq i}}^{N}\frac{1}{s_{j}^{2}-s_{i}^{2}}\bigg)\frac{\pi}{s_{i}}2\dim(V_{\chi})\Vol(X)P_{\sigma}(s_{i})\\\notag
&+\sum_{i=1}^{N}\bigg(\prod_{\substack{j=1\\ j\neq i}}^{N}\frac{1}{s_{j}^{2}-s_{i}^{2}}\bigg)\frac{1}{2s_{i}}L_{S}(s_{i}).
\end{align}
We multiply the last equation by $2s_{1}$.
\begin{align}
\sum_{\mu_{k}}\sum_{i=1}^{N}\bigg(\prod_{\substack{j=1\\ j\neq i}}^{N}\frac{1}{s_{j}^{2}-s_{i}^{2}}\bigg)2s_{1}\frac{m(\mu_{k})}{\mu_{k}+s_{i}^{2}}=\notag&\sum_{i=1}^{N}\bigg(\prod_{\substack{j=1\\ j\neq i}}^{N}\frac{1}{s_{j}^{2}-s_{i}^{2}}\bigg)\frac{4\pi s_{1}}{s_{i}}\dim(V_{\chi})\Vol(X)P_{\sigma}(s_{i})\\
&+\sum_{i=1}^{N}\bigg(\prod_{\substack{j=1\\ j\neq i}}^{N}\frac{1}{s_{j}^{2}-s_{i}^{2}}\bigg)\frac{s_{1}}{s_{i}}L_{S}(s_{i}).
\end{align}
We define the function $\varXi(s_{1},\ldots,s_{N})$ of the complex variables $s_{1},\ldots,s_{N}$ by
\begin{equation*}
\varXi(s_{1},\ldots,s_{N}):=\sum_{i=1}^{N}\bigg(\prod_{\substack{j=1\\ j\neq i}}^{N}\frac{1}{s_{j}^{2}-s_{i}^{2}}\bigg)\frac{s_{1}}{s_{i}}L_{S}(s_{i}).
\end{equation*}
We fix the complex numbers $s_{i}, i=2,\ldots,N$ with $s_{i}\neq s_{j}$ for $i,j=2,\ldots, N$
and let the complex number $s=s_{1}$ vary.
Then, 
\begin{equation*}
 \varXi(s,\ldots,s_{N})=\varXi(s),
\end{equation*}
and equation (7.3) becomes
\begin{align}
\notag\sum_{\mu_{k}}\sum_{i=1}^{N}\bigg(\prod_{\substack{j=1\\ j\neq i}}^{N}\frac{1}{s_{j}^{2}-s_{i}^{2}}\bigg)2s\frac{m(\mu_{k})}{\mu_{k}+s_{i}^{2}}
=\sum_{i=1}^{N}&\bigg(\prod_{\substack{j=1\\ j\neq i}}^{N}\frac{1}{s_{j}^{2}-s_{i}^{2}}\bigg)\frac{4\pi s}{s_{i}}\dim(V_{\chi})\Vol(X)P_{\sigma}(s_{i})\\
+&\varXi(s).
\end{align}
The term that contains the logarithmic derivative $L_{S}(s)$ in $\varXi(s)$ is of the form
\begin{equation}
 \bigg(\prod_{j=2}^{N}\frac{1}{s_{j}^{2}-s^{2}}\bigg)L_{S}(s).
\end{equation}
The term of 
\begin{equation*}
 \sum_{\mu_{k}}\sum_{i=1}^{N}\bigg(\prod_{\substack{j=1\\ j\neq i}}^{N}\frac{1}{s_{j}^{2}-s_{i}^{2}}\bigg)2s\frac{m(\mu_{k})}{\mu_{k}+s_{i}^{2}},
\end{equation*}
which is singular at  $s=\pm i\sqrt{\mu_{k}}$, $k\in \N$ is 
\begin{equation*}
\bigg(\prod_{j=2}^{N}\frac{1}{s_{j}^{2}-s^{2}}\bigg)2s\frac{m(\mu_{k})}{\mu_{k}+s^{2}}.
\end{equation*}
We multiply both sides of the 
equality (7.4) by
\begin{equation*}
 \prod_{j=2}^N(s_j^2-s^2).
\end{equation*}
Then, the residues of $L_{S}(s)$ at the points $\pm i\sqrt{\mu_{k}}$ 
are $m(\mu_{k})$, for $k\neq0$ and $2m(0)$, for $k=0$.

By (3.7), $L_{S}(s)$ decreases exponentially as Re$(s)\rightarrow \infty$. Therefore, the integral
\begin{equation*}
 \int_{s}^{\infty}L_{S}(w)dw
\end{equation*}
over a path connecting $s$ and infinity is well defined and 
\begin{equation}
 \log S(s;\sigma,\chi)=-\int_{s}^{\infty}L_{S}(w)dw.
\end{equation}
The integral above depends on the choice of the path, because $L_{S}(s)$
has singularities.
Since the residues of the singularities 
are integers, we can use the same argument as in the proof of Theorem 6.2.
If we exponentiate the right hand side of (7.6), then this exponential 
is independent of the choice of the path. The meromorphic continuation of the 
symmetrized zeta function $S(s;\sigma,\chi)$ to the whole complex plane follows.
\end{proof}

\begin{center}
\section{\textnormal{Meromorphic continuation of the Selberg and Ruelle zeta function}}
\end{center}

\begin{thm}
The Selberg zeta function $Z(s;\sigma,\chi)$ admits a meromorphic continuation to the whole complex plane $\C$. The set of the singularities
equals to $\{s_{k}^{\pm}=\pm i\lambda_{k}:\lambda_{k}\in \spec(D^{\sharp}_{\chi}(\sigma)),k\in\N\}$.
The orders of the singularities are equal to $\frac{1}{2}(\pm m_{s}(\lambda_{k})+m(\lambda_{k}^{2})$.
For $\lambda_{0}=0$, the order of the singularity is equal to $m(0)$.
\end{thm}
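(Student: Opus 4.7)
The plan is to obtain $Z(s;\sigma,\chi)$ by combining the already-continued symmetrized and super zeta functions through a square-root identity. From Definitions 3.3 and 3.4, the defining infinite products satisfy
\begin{equation*}
S(s;\sigma,\chi)\cdot Z^{s}(s;\sigma,\chi)=Z(s;\sigma,\chi)^{2},
\end{equation*}
valid in the right half-plane of absolute convergence. Taking logarithmic derivatives (cf.\ Lemma 3.6) gives the key identity
\begin{equation*}
\frac{d}{ds}\log Z(s;\sigma,\chi)=\tfrac{1}{2}\bigl(L_{S}(s)+L^{s}(s)\bigr),
\end{equation*}
which extends meromorphically to all of $\C$ by Theorems 6.2 and 7.1.

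Next I would read off the singularity structure of $L_Z:=\frac{d}{ds}\log Z(s;\sigma,\chi)$ from that of $L_S$ and $L^s$. By the Parthasarathy identity (4.8), $\spec(A^{\sharp}_{\chi}(\sigma))=\{\lambda_{k}^{2}:\lambda_{k}\in\spec(D^{\sharp}_{\chi}(\sigma))\}$, so both $L_S$ and $L^s$ are singular precisely at the points $s=\pm i\lambda_k$. By Theorem 6.2, the residue of $L^s$ there is $\pm m_s(\lambda_k)$; by Theorem 7.1, the residue of $L_S$ is $m(\lambda_k^{2})$ (and $2m(0)$ at $s=0$). Hence $L_Z$ is meromorphic on $\C$ with simple poles at $\pm i\lambda_k$ of residues $\tfrac{1}{2}(\pm m_s(\lambda_k)+m(\lambda_k^{2}))$, and at $s=0$ of residue $m(0)$ (using $m_s(0)=0$).

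The decisive step, and the one I expect to be the main obstacle, is showing these residues are \emph{integers}, so that integrating $L_Z$ and exponentiating produces a single-valued meromorphic function. This requires comparing the generalized eigenspace decompositions of $D^{\sharp}_\chi(\sigma)$ and of its square $A^{\sharp}_\chi(\sigma)$: for $\lambda_k\neq 0$ the generalized $\lambda_k^{2}$-eigenspace of $A^{\sharp}_\chi(\sigma)$ splits as the direct sum of the generalized $\pm\lambda_k$-eigenspaces of $D^{\sharp}_\chi(\sigma)$, giving
\begin{equation*}
m(\lambda_k^{2})=m(\lambda_k)+m(-\lambda_k),\qquad m_s(\lambda_k)=m(\lambda_k)-m(-\lambda_k),
\end{equation*}
so that $\tfrac{1}{2}(m(\lambda_k^{2})\pm m_s(\lambda_k))=m(\pm\lambda_k)\in\N$. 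This identification requires the non-self-adjoint Jordan structure argument (via the holomorphic functional calculus around $\pm\lambda_k$ applied to the contour of Section 5). The case $\lambda_0=0$ is handled separately, where only $L_S$ contributes and the residue is $m(0)$.

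With the residues shown to be integers, the final step mimics the end of the proof of Theorem 6.2: $L_Z(s)$ decays exponentially as $\mathrm{Re}(s)\to\infty$ by (3.1), so the path integral
\begin{equation*}
-\int_{s}^{\infty}L_Z(w)\,dw
\end{equation*}
defines $\log Z(s;\sigma,\chi)$ up to integer multiples of $2\pi i$ independently of the path chosen around the poles. Exponentiating yields a well-defined meromorphic continuation of $Z(s;\sigma,\chi)$ to $\C$, and the orders of the poles/zeros are precisely the residues computed above.
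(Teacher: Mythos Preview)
Your proposal is correct and follows the same overall strategy as the paper: write $Z(s;\sigma,\chi)^{2}=S(s;\sigma,\chi)\,Z^{s}(s;\sigma,\chi)$, invoke Theorems 6.2 and 7.1 together with the Parthasarathy identity (4.8), and then verify that the resulting half-orders are integers so that the square root (equivalently, the exponentiated path integral of $L_{Z}$) is single-valued.

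The only substantive difference is in the integrality step. The paper argues via the bundle comparison: by Proposition 4.1 and \cite[Proposition 5.4]{Spil} one has $E(\sigma)=E_{\tau_{s}(\sigma)}$ up to $\Z_{2}$-grading, and from this concludes $m_{s}(\lambda_{k})\equiv m(\lambda_{k}^{2})\bmod 2$, leaving the last implication implicit. You instead give the direct spectral argument: for $\lambda_{k}\neq 0$ the generalized $\lambda_{k}^{2}$-eigenspace of $(D^{\sharp}_{\chi}(\sigma))^{2}$ decomposes as the direct sum of the generalized $\pm\lambda_{k}$-eigenspaces of $D^{\sharp}_{\chi}(\sigma)$ (via coprimality of $(x-\lambda_{k})^{N}$ and $(x+\lambda_{k})^{N}$, or Riesz projections), yielding the sharper identity $m(\lambda_{k}^{2})=m(\lambda_{k})+m(-\lambda_{k})$ and hence $\tfrac{1}{2}\bigl(m(\lambda_{k}^{2})\pm m_{s}(\lambda_{k})\bigr)=m(\pm\lambda_{k})\in\N$. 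Your route is more elementary and more informative---it identifies the order, not just its parity---and it works immediately once (4.8) is in place; the paper's route is terser but leans on the representation-theoretic bundle identification that in fact already underlies (4.8). Your concluding path-integral argument is the same mechanism used in the proofs of Theorems 6.2 and 7.1, whereas the paper simply takes the square root of the meromorphic product directly; these are equivalent.
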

\begin{proof}
 We observe at first that
\begin{equation*}
  Z(s;\sigma,\chi)=\sqrt{S(s;\sigma,\chi)Z^{s}(s;\sigma,\chi)}.
 \end{equation*}
Recall that by equation (4.8) we have $A^{\sharp}_{\chi}(\sigma)=(D^{\sharp}_{\chi}(\sigma))^{2}$. Hence, we can identify the eigenvalues $\mu_{k}$
of $A^{\sharp}_{\chi}(\sigma)$ with $\lambda_{k}^{2}$, where $\lambda_{k}\in \spec(D^{\sharp}_{\chi}(\sigma))$.
By Theorem 6.2 and Theorem 7.1, the product $S(s;\sigma,\chi)Z^{s}(s,\sigma,\chi)$ has its singularities at $s_{k}^{\pm}=\pm i\lambda_{k}$,
of order $\pm m_{s}(\lambda_{k})+m(\lambda_{k}^{2})$.
We need to prove that the order of the singularities of $Z(s;\sigma,\chi)$ is an even integer.
This follows from the definition of the algebraic multiplicities $m_{s}(\lambda_{k}),m(\lambda_{k}^{2})$
and the constuction of the locally homogenous vector bundles $E(\sigma),E_{\tau_{s}(\sigma)}$ associated to the representations $\tau(\equiv\tau_{\sigma})$ and
$\tau_{s}(\sigma)$ of $K$.
By \cite[ Proposition 5.4]{Spil} together with equation (4.5), and Proposition 4.1 together with equation (4.2),
we can choose representations $\tau$ of $K$, such that 
$E(\sigma)=E_{\tau_{s}(\sigma)}$ up to a $\Z_{2}$-grading.
Hence, $
 m_{s}(\lambda_{k})\equiv m(\lambda_{k}^{2})\mod 2.
$
The assertion follows.
\end{proof}
\begin{thm}
For every $\sigma\in\widehat{M}$, the Ruelle zeta function $R(s;\sigma,\chi)$ admits a meromorphic continuation to the whole complex plane $\C$.
\end{thm}
\begin{proof}
The assertion follows from Theorem 8.1 together with \cite[Theorem 6.6]{Spil}.
\end{proof}

\bibliographystyle{amsalpha}
\bibliography{references3}
UNIVERSIT\"{A}T BONN, 		
MATHEMATISCHES INSTITUT, 	
ENDENICHER ALLE 60, 		
D-53115,			
GERMANY					\\
\textit{E-mail address}: xspilioti@math.uni-bonn.de

\end{document}